\documentclass[11pt]{amsart}
\usepackage{amsxtra, amsfonts, amsmath, amsthm, amstext, amssymb, amscd, mathrsfs, verbatim, color}
\usepackage{threeparttable}
\usepackage{mathtools}
\usepackage[ansinew]{inputenc}\usepackage[T1]{fontenc}
\usepackage[all,cmtip]{xy}
\addtolength{\topmargin}{-0.4cm}
\addtolength{\textheight}{0.4cm}
\addtolength{\evensidemargin}{-0.6cm}
\addtolength{\oddsidemargin}{-0.6cm}
\addtolength{\textwidth}{1.2cm}
\theoremstyle{plain}


\newtheorem{theorem}{Theorem}[section]
\newtheorem{lemma}[theorem]{Lemma}
\newtheorem{definition-theorem}[theorem]{Definition-Theorem}
\newtheorem{proposition}[theorem]{Proposition}
\newtheorem{corollary}[theorem]{Corollary}

\theoremstyle{definition}
\newtheorem{definition}[theorem]{Definition}
\newtheorem{example}[theorem]{Example}
\newtheorem{remark}[theorem]{Remark}
\newtheorem{notation}[theorem]{Notation}
\newcommand \bth[1] { \begin{theorem}\label{t#1} }
\newcommand \ble[1] { \begin{lemma}\label{l#1} }

\newcommand \bpr[1] { \begin{proposition}\label{p#1} }
\newcommand \bco[1] { \begin{corollary}\label{c#1} }
\newcommand \bde[1] { \begin{definition}\label{d#1}\rm }
\newcommand \bex[1] { \begin{example}\label{e#1}\rm }
\newcommand \bre[1] { \begin{remark}\label{r#1}\rm }

\newcommand \bnota[1] {\begin{notation}\label{n#1}\rm }
\newcommand {\ele} { \end{lemma} }

\newcommand {\epr} { \end{proposition} }
\newcommand {\eco} { \end{corollary} }
\newcommand {\ede} { \end{definition} }
\newcommand {\eex} { \end{example} }
\newcommand {\ere} { \end{remark} }
\newcommand {\enota} { \end{notation} }





\begin{document}
\title[Branching for standard modules]{Ext-Multiplicity Theorem for Standard Representations of $(\mathrm{GL}_{n+1},\mathrm{GL}_n)$ } 

\author[Kei Yuen Chan]{Kei Yuen Chan}
\address{Shanghai Center for Mathematical Sciences, Fudan University \\}
\email{kychan1@hku.hk}
\maketitle

\begin{abstract}
Let $\pi_1$ be a standard representation of $\mathrm{GL}_{n+1}(F)$ and let $\pi_2$ be the smooth dual of a standard representation of $\mathrm{GL}_n(F)$. When $F$ is non-Archimedean, we prove that $\mathrm{Ext}^i_{\mathrm{GL}_n(F)}(\pi_1, \pi_2)$ is $\cong \mathbb C$ when $i=0$ and vanishes when $i \geq 1$. The main tool of the proof is a notion of left and right Bernstein-Zelevinsky filtrations. An immediate consequence of the result is to give a new proof on the multiplicity at most one theorem. Along the way, we also study an application of an Euler-Poincar\'e pairing formula of D. Prasad on the coefficients of Kazhdan-Lusztig polynomials.

When $F$ is an Archimedean field, we use the left-right Bruhat-filtration to prove a multiplicity result for the equal rank Fourier-Jacobi models of standard principal series.

\end{abstract}

\section{Introduction}

Let $F$ be a local field. One of the important results in quotient branching law is the multiplicity one phenomenon: for an irreducible representation $\pi_1$ of $\mathrm{GL}_{n+1}(F)$ and $\pi_2$ of $\mathrm{GL}_n(F)$,
\[ \mathrm{dim}~\mathrm{Hom}_{\mathrm{GL}_n(F)}(\pi_1, \pi_2) \leq 1,
\]
which is a part of the Gan-Gross-Prasad problems \cite{GGP12}. This is established by Aizenbud-Gourevitch-Rallis-Schiffman \cite{AGRS10} and Sun-Zhu \cite{SZ12}.  Given the uniqueness of an irreducible quotient, the existence problem remains open for general situations. Partial progress is obtained for some special cases such as generic representations (e.g. \cite{JPSS83, GGP12}) and non-tempered representations from Arthur packets (e.g. \cite{GGP20, Ch20}) in $p$-adic fields, and generic representations (e.g. \cite{Ja09}) in real or complex fields.

Assume $F$ is non-Archimedean. Let $G_n=\mathrm{GL}_n(F)$. The goal of this paper is to study the branching problem for another important class of representations--standard modules and its Ext-analog (c.f. \cite{Pr18}). Precisely, we prove that:
\begin{theorem} \label{thm multi standard}
Let $F$ be a non-archimedean local field. Let $\pi_1$ and $\pi_2$ be standard modules of $G_{n+1}$ and $G_n$ respectively. Then 
\[  \mathrm{dim}~ \mathrm{Hom}_{G_n}(\pi_1, \pi_2^{\vee}) =1
\]
and, for all $i \geq 1$,
\[ \mathrm{Ext}^i_{G_n}(\pi_1 , \pi_2^{\vee}) =0 .
\]
\end{theorem}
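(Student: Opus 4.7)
The plan is to combine the Bernstein-Zelevinsky filtration machinery with the Euler-Poincar\'e pairing formula of D. Prasad. First I would exploit the embedding $G_n \hookrightarrow G_{n+1}$ through the mirabolic subgroup $P_{n+1}$: any smooth $G_{n+1}$-representation carries, when restricted to $P_{n+1}$, a canonical ``right'' Bernstein-Zelevinsky filtration whose successive quotients are built from the BZ-derivatives $\pi_1^{(k)}$ by compactly supported induction (twisted by the modulus character). Since standard modules are parabolically induced from essentially square-integrable representations, the geometric lemma provides an explicit control of each $\pi_1^{(k)}$, and these derivatives are again (up to character twists and parabolic induction) of standard type, which is precisely the feature that will drive the induction on rank.

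Next I would apply Bernstein's second adjointness to each graded piece to rewrite $\mathrm{Ext}^i_{G_n}(\text{graded piece}, \pi_2^{\vee})$ as an Ext group on a smaller Levi between a derivative of $\pi_1$ and an appropriate Jacquet module of $\pi_2^{\vee}$. The geometric lemma describes these Jacquet modules through their own filtration whose successive quotients are again products of standard modules. At this point the argument should be symmetrized by using the ``left'' BZ filtration as well, so that derivatives of $\pi_1$ are paired with Jacquet modules of $\pi_2^{\vee}$ in a compatible way; this is the ``left-right'' feature promised in the abstract and is what makes the bookkeeping close under the induction.

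The inductive step, together with the spectral sequence of the filtration, should show that at most one successive quotient carries a non-zero contribution to $\mathrm{Ext}^\ast_{G_n}(\pi_1, \pi_2^{\vee})$, and that this contribution is one-dimensional. To pin down the cohomological degree, I would invoke Prasad's Euler-Poincar\'e pairing formula, which computes $\sum_i (-1)^i \dim \mathrm{Ext}^i_{G_n}(\pi_1, \pi_2^{\vee})$ as an elliptic character-theoretic quantity; one shows this equals $1$, and combining it with the filtration-based vanishing forces the single non-zero contribution into degree $0$, giving simultaneously the Hom-multiplicity and the higher Ext-vanishing.

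The main obstacle I foresee is the higher-Ext vanishing. While the $\mathrm{Hom}$ statement is already essentially known (at the level of irreducibles) and the Euler-Poincar\'e side gives the correct alternating sum, showing that the spectral sequence of the BZ filtration degenerates appropriately for \emph{all} pairs of standard modules requires a uniform inductive setup. The difficulty is that the derivatives of a standard module are no longer irreducible in general, so the induction hypothesis must be stated for a class broad enough to include all the graded pieces one encounters. The symmetric ``left-right'' formulation should be the key technical device: it cuts down the potentially contributing graded pieces to those where the cuspidal supports of $\pi_1^{(k)}$ and the corresponding Jacquet module of $\pi_2^{\vee}$ match precisely, and for exactly one such match the inductive hypothesis yields the one-dimensional $\mathrm{Hom}$ together with the desired vanishing in all positive degrees.
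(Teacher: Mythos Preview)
Your proposal is in the right neighborhood---the left-right Bernstein--Zelevinsky filtration and cuspidal-support matching are indeed the heart of the argument---but it diverges from the paper in two essential respects, and one of them is a genuine gap.

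First, the paper does \emph{not} use the Euler--Poincar\'e pairing to prove Theorem~\ref{thm multi standard}. The EP formula appears only in Section~\ref{ss kazhdan lusztig coe}, as a separate application to Kazhdan--Lusztig coefficients. The proof of the theorem obtains both $\dim\mathrm{Hom}=1$ and the higher Ext-vanishing simultaneously from a single induction; there is no step that ``pins down the degree''. In fact, within your own sketch the EP step is idle: if the filtration argument already shows the surviving layer contributes only in degree~$0$ (which is what the induction hypothesis must say), you are done; and if it does not, $\mathrm{EP}=1$ alone cannot force concentration in degree~$0$.

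Second---and this is the gap---the paper does not induct on the rank $n$, and an induction on rank does not obviously close. The paper inducts on $L^*=L(\nu^{1/2}\mathfrak m,\mathfrak m'^{\vee})$, the number of pairs of segments (one from each side) lying on the same cuspidal line. The base case $L^*=0$ is handled by showing that only the bottom (Gelfand--Graev) layer of the BZ filtration contributes, whence projectivity of $\Pi_n$ and Whittaker multiplicity one give the result. For $L^*>0$, one chooses a \emph{maximal} $\rho^*$ in $\mathrm{csupp}(\nu^{1/2}\pi_1)\cup\mathrm{csupp}(\pi_2^{\vee})$ along a fixed cuspidal line, peels off the single segment $\Delta$ carrying $\rho^*$, and applies the right filtration (Proposition~\ref{prop right filtration p}) to $\mathrm{St}(\Delta)\times(\text{rest})$. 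Maximality of $\rho^*$ forces every layer except the bottom Rankin--Selberg layer to vanish by a cuspidal-support mismatch after second adjointness; then the Gan--Gross--Prasad type reduction (Proposition~\ref{prop multiplicity dual eqn}) replaces $\Delta$ by a cuspidal $\sigma$ unlinked to everything, strictly decreasing $L^*$. When $\rho^*$ comes from the $\pi_2$ side instead, the dual-restriction trick (Proposition~\ref{prop dual restrict}) swaps the roles of $\pi_1$ and $\pi_2$ and one uses the left filtration (Proposition~\ref{prop left filtration}). Your sketch lacks both the correct induction parameter and this ``peel one carefully chosen segment, then replace it by a generic cuspidal'' mechanism; without them, the claim that exactly one graded piece survives is not justified, since the full BZ derivatives $\pi_1^{(k)}$ of a standard module are only \emph{filtered} by products of smaller standard pieces, and many layers can a priori have matching cuspidal support.
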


 From the viewpoint of period integrals, the local uniqueness property would give the uniqueness of global periods as well as guarantee Euler factorization of period. For Archimedean case, the work of Sun-Zhu \cite{SZ12} indicates such uniqueness for the Hom multiplicity result, and we establish that for the Fourier-Jacobi models of standard principal series. In our context, the regularized periods for Eisenstein series for $\mathrm{GL}_{n+1}\times \mathrm{GL}_n$ are studied in \cite{IY15} by Ichino-Yamana. 

 From the homological viewpoint, Theorem \ref{thm multi standard} generalizes the Prasad Ext-vanishing conjecture, proved in \cite{CS21}. Indeed, an analogue result in the ordinary setting also holds, that is, when both $\pi_1$ and $\pi_2$ are standard representations of $G_n$, for all $i$,
\[  \mathrm{Ext}_{G_n}^i(\pi_1, \pi_2^{\vee}) =0 
\]
if $\pi_1 \not\cong \theta(\pi_2)$ (see Section \ref{ss notations} for $\theta$). However, the higher Ext-vanishing above does not always hold when $\pi_1 \cong \pi_2$, in contrast with Theorem \ref{thm multi standard}. As we saw before in several examples such as the square-integrable representations \cite{CS21, Ch21}, one tends to have more Ext-vanishing phenomenon in the relative setting.

From the representation-theoretic viewpoint, Theorem \ref{thm multi standard} is expected to have an application on determining the layer of the Bernstein-Zelevinsky filtration that contributes to some quotient branching laws \cite{Ch22+}.

Theorem \ref{thm multi standard} does not hold if one replaces standard modules by arbitrary parabolically induced modules. For example, $\nu^{-1/2}\times \nu^{1/2}$ admits the trivial $\mathrm{GL}_1(F)$-character as a quotient with multiplicity $2$. Here $\nu(g)=|\mathrm{det}(g)|_F$. This is due to that the $G_1$-Iwahori component of  $\nu^{-1/2}\times \nu^{1/2}$ is not indecomposable. In such sense, Theorem \ref{thm multi standard} is compatible with \cite[Conjecture  6.5]{Ch21} (more precisely, a variation of the conjecture discussed in {\it loc. cit}).

One may of course expect a similar result should hold for other classical groups, which extends a conjecture of D. Prasad \cite[Conjecture 7.1]{Pr18}. Such extension would also be important for a conjectural relation between an integral formula and an Euler-Poincar\'e pairing in \cite{Pr18}. There are Hom-multiplicity results for other classical groups such as the work of M\oe glin-Waldspurger, Beuzart-Plessis, Loeffler \cite{MW12, BP16, Lo21}. 

We remark that an element in the Hom space in Theorem \ref{thm multi standard} is explicitly constructed in \cite{JPSS83} via Rankin-Selberg integrals. Theorem \ref{thm multi standard} could also be directly generalized to other Bessel and Fourier-Jacobi models by using a Gan-Gross-Prasad type reduction \cite{Ch20}. 

Since any irreducible representation appears in a quotient of a standard representation, we have an alternate proof for the multiplicity theorem for irreducible ones:

\begin{corollary} \cite{AGRS10} \label{cor mult one original}
Let $F$ be non-Archimedean. Let $\pi_1$ and $\pi_2$ be irreducible smooth representations of $G_{n+1}$ and $G_n$ respectively. Then 
\[ \mathrm{dim}~\mathrm{Hom}_{G_n}(\pi_1, \pi_2)  \leq 1 .
\]
\end{corollary}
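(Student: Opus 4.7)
My plan is to reduce the irreducible statement directly to \thref{hm multi standard} by sandwiching each irreducible between a standard module and the dual of a standard module, and then invoking the left-exactness of $\mathrm{Hom}$ in both variables.

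First I would use the Langlands/Zelevinsky classification over a non-Archimedean field: every irreducible smooth representation of $G_k$ is the unique irreducible quotient of some standard module. Applied to $\pi_1$, this produces a standard module $\lambda_1$ of $G_{n+1}$ with a surjection $\lambda_1 \twoheadrightarrow \pi_1$. Applied to the irreducible representation $\pi_2^{\vee}$ of $G_n$, it produces a standard module $\lambda_2$ of $G_n$ with $\lambda_2 \twoheadrightarrow \pi_2^{\vee}$. Dualizing the second map and using that smooth duality is exact on admissible representations and involutive on irreducibles, I obtain an injection $\pi_2 \hookrightarrow \lambda_2^{\vee}$.

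Next I would apply $\mathrm{Hom}_{G_n}(-,\pi_2)$ to the surjection $\lambda_1 \twoheadrightarrow \pi_1$ and $\mathrm{Hom}_{G_n}(\lambda_1,-)$ to the injection $\pi_2 \hookrightarrow \lambda_2^{\vee}$. Left-exactness on each side yields the chain of injections
\[
\mathrm{Hom}_{G_n}(\pi_1,\pi_2) \hookrightarrow \mathrm{Hom}_{G_n}(\lambda_1,\pi_2) \hookrightarrow \mathrm{Hom}_{G_n}(\lambda_1,\lambda_2^{\vee}).
\]
By \thref{hm multi standard} the rightmost space is $1$-dimensional, so the leftmost has dimension at most $1$, which is precisely the claim.

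There is no real obstacle here; the argument is a formal two-step sandwich. The only point to double-check is that the required surjection from a standard module onto an arbitrary irreducible is available in the non-Archimedean setting, but this is exactly the Langlands/Zelevinsky classification and carries no extra difficulty. Note that the argument also shows why \thref{hm multi standard} is genuinely stronger than \coref{or mult one original}: it upgrades the inequality $\leq 1$ to an equality $=1$ at the level of standard modules, and simultaneously provides the vanishing of all higher $\mathrm{Ext}$ groups on which the reduction above silently relies whenever one wants to extract information beyond the bare Hom.
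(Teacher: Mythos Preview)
Your argument is correct and is essentially identical to the paper's own proof: both realize $\pi_1$ as a quotient of a standard module $\lambda(\mathfrak m_1)$, realize $\pi_2$ as a submodule of $\lambda(\mathfrak m_2)^{\vee}$ by applying the Langlands classification to $\pi_2^{\vee}$ and dualizing, and then use left-exactness of $\mathrm{Hom}$ together with \thref{hm multi standard}. The only caveat is your closing remark: the reduction for the $\mathrm{Hom}$ bound does \emph{not} rely on any higher $\mathrm{Ext}$ vanishing, so that sentence overstates the dependence.
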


The main ingredients of the proof for Theorem \ref{thm multi standard} include the theory of Bernstein-Zelevinsky and the Langlands classification, and thus our method is different from \cite{AGRS10}. We remark that our proof uses the multiplicity one theorem for Whittaker models \cite{GK71, Sh74}, and works more directly for all characteristics (e.g. c.f. \cite{Me21}). While the proof in \cite{CS21} also uses a form of left and right filtrations, the argument is not the same as the one presented in this paper and, for example, we did not use Proposition \ref{prop dual restrict} at that time. The proof of this article is a hybrid of \cite{Ch21} and \cite{Ch20} (also c.f. \cite{MW12}).





A main ingredient of the proof is the notion of left-right Bernstein-Zelevinsky filtration. The study of left and right filtrations already has applications in several branching problems such as indecomposability and submodule structure \cite{CS21, Ch21}. 

We also study the standard module in the context of an Euler-Poincar\'e pairing formula due to D. Prasad, which is of independent interest. The standard module gives a new basis in the Grothendieck group of the category of smooth representations of $G_n$. We deduce a consequence on the sum of the coefficients involved in the change of basis in Section \ref{ss kazhdan lusztig coe}.

It follows from Harish-Chandra's Lefschetz principle \cite{HC70}, the picture for real groups and $p$-adic groups should be closely related. Motivated from this, we study the Bruhat filtration \cite{CHM00} and its left-right version in Section \ref{ss filtrion from parabolic}, and use these to prove a Hom-multiplicity result for equal rank Fourier-Jacobi models for principal standard modules in Section \ref{ss archi case multi} when $F=\mathbb R$ and in Section \ref{ss multi complex} when $F=\mathbb C$. The multiplicity one for irreducible representations of the equal rank Fourier-Jacobi model is obtained by Liu-Sun \cite{LS13}. Precisely, we have:

\begin{theorem} \label{thm multi standard real}
Let $F=\mathbb{R}$ or $\mathbb{C}$. Let $\pi_1$ and $\pi_2$ be standard principal series of $\mathrm{GL}_n(F)$. Then 
\[  \mathrm{dim}~\mathrm{Hom}_{\mathrm{GL}_n(F)}(\pi_1 \widehat{\otimes} \mathcal S(F^n), \pi_2^{\vee}) =1.
\]
Here $\mathcal S(F^n)$ is the space of Schwartz functions on $F^n$ with a natural $\mathrm{GL}_n(F)$-action.
\end{theorem}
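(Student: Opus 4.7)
The plan is to mimic the non-Archimedean strategy of \thref{multi standard}, replacing the Bernstein-Zelevinsky filtration by the left-right Bruhat filtration developed in Section \ref{ss filtrion from parabolic}. First I would decompose the $\mathrm{GL}_n(F)$-module $\mathcal{S}(F^n)$ along the orbit stratification of $F^n$, giving a short exact sequence
\begin{equation*}
0 \to \mathcal{S}(F^n \setminus \{0\}) \to \mathcal{S}(F^n) \to \mathcal{S}_{\{0\}}(F^n) \to 0
\end{equation*}
of smooth Fr\'echet $\mathrm{GL}_n(F)$-modules. The kernel is a Schwartz-induction from the mirabolic $P$ (the stabilizer of a nonzero vector), while the cokernel is a discrete sum of twists $|\det|^s$ coming from distributions supported at $0$. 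Tensoring with $\pi_1$ and applying $\mathrm{Hom}_{\mathrm{GL}_n(F)}(-,\pi_2)$ converts the problem into analyzing two pieces: a restriction problem $\mathrm{Hom}_P(\pi_1,\pi_2)$ (by Frobenius reciprocity for Schwartz induction) and a character-twist problem $\mathrm{Hom}_{\mathrm{GL}_n(F)}(\pi_1 \widehat{\otimes} |\det|^s, \pi_2)$ for $s$ in a discrete set.

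Next, I would apply the left-right Bruhat filtration to both $\pi_1$ and $\pi_2$ in the restriction problem $\mathrm{Hom}_P(\pi_1, \pi_2)$. Since both $\pi_i$ are standard principal series, the graded pieces of this double filtration are indexed by Weyl group elements $w \in W_{\mathrm{GL}_n}$, and each graded piece becomes a Hom between characters of a torus-like subgroup. A character-matching computation---parallel to the one appearing in the Bernstein-Zelevinsky argument of \thref{multi standard}---shows that all but the longest Weyl element $w_0$ contribute zero, and the $w_0$-cell contributes exactly one dimension, reflecting the uniqueness of the Whittaker model on $\mathrm{GL}_n(F)$ \cite{Sh74}. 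The character-twist contribution from $\mathcal{S}_{\{0\}}(F^n)$ is ruled out by comparing the infinitesimal characters of $\pi_1 \widehat{\otimes} |\det|^s$ and $\pi_2$, which cannot coincide for generic standard principal series; any exceptional $s$ is handled by inspecting the Langlands data on both sides. Finally, existence of at least one nonzero Hom is guaranteed by reading off the $w_0$-cell explicitly, or alternatively by a Rankin-Selberg-type integral construction in the spirit of \cite{JPSS83}.

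The hard part will be the analytic bookkeeping: the completed tensor $\widehat{\otimes}$ must be compatible with the Bruhat filtration and with Schwartz induction, which requires working in a nuclear Fr\'echet setting as in \cite{CHM00}; long exact sequences of $\mathrm{Ext}$ groups associated to the orbit decomposition can produce boundary contributions that must be controlled directly rather than through derived-functor machinery. The $F=\mathbb{C}$ case demands additional care because the Weyl group acts with richer stabilizers and the character-matching combinatorics is correspondingly more intricate than the real case. Throughout, one must ensure that the filtrations are genuine filtrations of smooth Fr\'echet modules of moderate growth, rather than of their underlying abstract vector spaces, so that Frobenius reciprocity and the splitting of graded pieces can be invoked legitimately.
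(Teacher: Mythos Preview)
Your proposal has a genuine gap at the step you describe as ``a character-matching computation\ldots shows that all but the longest Weyl element $w_0$ contribute zero.'' This is not a routine computation; it is the entire content of the theorem, and it is \emph{false} in the generality you need. For standard principal series with linked parameters (i.e.\ $c_i - c_j \in \mathbb{Z}$ or $c_i + \tfrac12 + c_j' \in \mathbb{Z}$), many cells of the Bruhat filtration on $\pi_1|_{M_n}$ can and do support nonzero maps to characters appearing in the Bruhat filtration of $\pi_2$. The non-Archimedean argument you are modelling on does \emph{not} assert that only the deepest layer survives in one shot; it is an induction on the number $L^*$ of linked pairs, peeling off one segment at a time and reducing to a strictly smaller instance of the same theorem. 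Your outline has no such inductive mechanism. Relatedly, your treatment of the quotient $\mathcal S_{\{0\}}(F^n)$ by infinitesimal character comparison fails exactly when parameters are linked: the Taylor layers are the finite-dimensional modules $\mathrm{Sym}^k(F^n)^*$, and $\mathrm{Hom}_{G_n}(\pi_1 \otimes \mathrm{Sym}^k(F^n)^*, \pi_2)$ can be nonzero for integral parameter shifts, which is precisely the case you cannot exclude.

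The paper's argument is organized quite differently. Rather than decomposing $\mathcal S(F^n)$ and descending to the mirabolic, it \emph{ascends} to $G_{n+1}$: one adjoins an extra unlinked character $\nu^*$ to form $\widetilde\pi = \nu^* \times \pi_1 \in \mathrm{Alg}(G_{n+1})$, and then \prref{op transfer to fj} identifies $\pi_1 \widehat\otimes \mathcal S(F^n) \otimes |\cdot|^{1/2}$ with the open Bruhat cell $\mathcal S(\mathcal Q^*, \widetilde\chi)$ of $\widetilde\pi|_{G_n}$. The remaining cells of the Bruhat filtration on $\widetilde\pi|_{G_n}$ are killed by a $\mathfrak b_n$-homology computation (Lemmas~\ref{lem vanishing homology} and~\ref{lem transfer lemma}) that hinges on the non-linkedness of $\nu^*$ and on a carefully chosen index $i^*$. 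One then permutes factors of $\widetilde\pi$ using irreducibility criteria for $\mathrm{GL}_2$ \cite{Sp81}, re-applies the transfer lemma to pull a linked factor out, and lands on a Fourier--Jacobi problem with strictly fewer linked pairs. The left-right trick (\leref{em restriction for right}) handles the case where the roles of $\pi_1$ and $\pi_2$ must be swapped. The base case $N^*=0$ is \emph{not} Whittaker uniqueness but the irreducible multiplicity-one theorem of Liu--Sun \cite{LS13}, since unlinked principal series are already irreducible. If you want to salvage your approach, you would need to build an analogous induction on linkedness and find a substitute for the ascent to $G_{n+1}$; as written, the proposal skips the inductive heart of the argument.
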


We remark that we essentially use \cite{LS13} in Theorem \ref{thm multi standard real}, in constrast with the $p$-adic group case.

\subsection{Acknowledgments} The author would like to thank Dragan Mili\v{c}i\'c, Dipendra Prasad, Gordan Savin and Peter Trapa for useful communications. The author would like to thank the referee for useful remarks and suggestions to improve expositions of the article.




\section{Preliminaries}
\subsection{Notations} \label{ss notations}

Let $G_n=\mathrm{GL}_n(F)$, where $F$ is a non-Archimedean local field. Denote by $\mathrm{Irr}(G_n)$ the set of irreducible representations of $G_n$. Denote by $\mathrm{Irr}^c(G_n)$ the set of irreducible cuspidal representations of $G_n$. Let $\mathrm{Irr}=\sqcup_n \mathrm{Irr}(G_n)$ and let $\mathrm{Irr}^c=\sqcup_n\mathrm{Irr}^c(G_n)$. Let $\mathrm{Alg}(G_n)$ be the category of smooth representations of $G_n$. For $\pi \in \mathrm{Alg}(G_n)$, set $n(\pi)=n$, and set $\pi^{\vee}$ to be the smooth dual of $\pi$. Let $\nu(g)=|\mathrm{det} g|_F$ for $g \in G_n$.

For a closed subgroup $H$ of $G$ and $\pi \in \mathrm{Alg}(H)$, we denote by $\mathrm{Ind}_H^G\pi$ the normalized parabolic induction, and denote by $\mathrm{ind}_H^G\pi$ the normalized parabolic induction with compact support. 

A Zelevinsky segment is a datum of the form 
\[ \Delta=[\nu^a\rho, \nu^b\rho]=\left\{ \nu^a\rho, \nu^{a+1}\rho, \ldots, \nu^b \rho \right\},\] where $\rho \in \mathrm{Irr}^c$ and $a, b \in \mathbb{C}$ with $b-a \in \mathbb Z_{\geq 0}$. The absolute (resp. relative) length of $\Delta$ is defined as $(b-a+1)n(\rho)$ (resp. $b-a+1$). We set $a(\Delta)=\nu^a\rho$ and $b(\Delta)=\nu^b\rho$. For a segment $\Delta$, we associate an irreducible representation $\mathrm{St}(\Delta)$, which is the unique irreducible quotient of 
\[  \nu^a \rho \times \ldots \times \nu^b \rho .
\]

For $\rho_1, \rho_2 \in \mathrm{Irr}^c$, we write $\rho_1 >\rho_2$ if there exists $c \in \mathbb{Z}_{>0}$ such that $\rho_1 \cong \nu^c \rho_2$. For $\Delta_1, \Delta_2$, we write $\Delta_1>\Delta_2$ if $b(\Delta_1)>b(\Delta_2)$. Two segments $\Delta_1, \Delta_2$ are linked if $\Delta_1 \cup \Delta_2$ is still a segment, and $\Delta_1 \not\subset \Delta_2$, and $\Delta_2 \not\subset \Delta_1$. Otherwise, we say the segments are unlinked.

Let $\pi_1 \in \mathrm{Alg}(G_{n_1})$ and let $\pi_2 \in \mathrm{Alg}(G_{n_2})$. We fix a parabolic subgroup $P=LN$ (resp. $\overline{P}=L\overline{N}$), where $L$ is the Levi subgroup containing matrices $\mathrm{diag}(g_1, g_2)$ for $g_i \in G_{n_i}$ and $N$ (resp. $\overline{N}$) is the unipotent radical contained in the subgroup of upper (resp. lower) triangular matrices. 

We define the products:
\[  \pi_1 \times \pi_2 = \mathrm{Ind}_P^G \pi_1 \boxtimes \pi_2,
\]
where $\pi_1 \boxtimes \pi_2$ extends trivially to $P$. 

A multisegment $\mathfrak m$ is a multiset of segments. Denote by $\mathrm{Mult}_n$ the set of multisegments, for which the sum of absolute lengths of segments is equal to $n$. Let $\mathrm{Mult}=\sqcup_n \mathrm{Mult}_n$. For a multisegment $\mathfrak m = \left\{ \Delta_1, \ldots, \Delta_r \right\}$, we relabel such that for $i <j$, $\Delta_i \not <\Delta_j$. Then the Langlands classification shows that there exists a unique simple quotient, denoted by $\mathrm{St}(\mathfrak m)$, of the induced representation:
\[  \lambda(\mathfrak m):= \mathrm{St}(\Delta_1) \times \ldots \times \mathrm{St}(\Delta_r)
\]
We shall also refer those $\lambda(\mathfrak m)$ to be {\it standard modules}.

For $\Delta=[\nu^a\rho, \nu^b\rho]$, define 
\[  \Delta^{\vee} = [\nu^{-b}\rho^{\vee}, \nu^{-a}\rho^{\vee}] 
\]
We have that
\[ \mathrm{St}(\Delta)^{\vee} \cong \mathrm{St}(\Delta^{\vee}) .
\]

For each irreducible $\pi$, there exists a unique multiset $\left\{ \rho_1, \ldots, \rho_k \right\}$ with each $\rho_i \in \mathrm{Irr}^c$ such that $\pi$ is a composition factor of $\rho_1\times \ldots \times \rho_k$. We shall denote such set by $\mathrm{csupp}(\pi)$, and call it the cuspidal support of $\pi$. For a representation $\pi$ of finite length, if all the simple composition factors in $\pi$ have the same cuspidal support, then we also write $\mathrm{csupp}(\pi)$ to represent the cuspidal support of any simple composition factor in $\pi$.

Let $\theta: G_n \rightarrow G_n$ given by $\theta(g)=g^{-t}$ be the Gelfand-Kazhdan involution \cite{BZ76}. It induces a self-equivalence exact functor on $\mathrm{Alg}(G_n)$, still denoted by $\theta$.

Let $U_i \subset G_i$ be the subgroup of all upper triangular matrices and let $\psi_i$ be a generic character on $U_i$. Define 
\[R_i=\left\{ \begin{pmatrix} I_{n-i} & m \\ & u \end{pmatrix} : m \in \mathrm{Mat}_{{n-i}\times i}, u \in U_i \right\} \]
\[ \psi': R_i \rightarrow \mathbb{C}; \quad \psi'(\begin{pmatrix} I_{n-i} & m \\ & u \end{pmatrix} )=\psi_i(u) . \]
The right $i$-th Bernstein-Zelevinsky derivative is defined as:
\[ \pi^{(i)} = \delta^{-1/2} \frac{\pi}{\langle n.x-\psi'(n)x : x \in \pi, n \in R_i \rangle },
\]
where $\delta$ is the modular character for $R_i$. We also define
\[  {}^{(i)}\pi=\theta(\theta(\pi)^{(i)}), \quad \pi^{[i]}=\nu^{1/2}\cdot\pi^{(i)}, \quad {}^{[i]}\pi=\nu^{-1/2}\cdot{}^{(i)}\pi,
\]
which will be called left, shifted right and shifted left derivatives respectively. Note that when $i=0$, $\pi^{[0]}$ is just $\nu^{1/2}\otimes \pi$.

One of basic and important computations for derivatives is \cite{Ze80}
\begin{align} \label{eqn steinberg derivative}
  \mathrm{St}(\Delta)^{(i)} =\mathrm{St}({}^{(i)}\Delta), \quad {}^{(i)}\mathrm{St}(\Delta)=\mathrm{St}(\Delta^{(i)}),
\end{align}
where ${}^{(i)}\Delta$ is obtained by truncating $\Delta$ from left $i$-times and $\Delta^{(i)}$ is obtained by truncating $\Delta$ from right $i$-times.

Let $M_n\subset G_n$ be the mirabolic subgroup of $G_n$ i.e. containing all matrices with the last row $(0,\ldots, 0,1)$. We have a sequence of embeddings:
\begin{align} \label{eqn subgroup relations}
   G_0 =M_1 \subset G_1\subset  \ldots \subset G_{n-1} \subset M_n \subset G_n,
\end{align}
where $G_{i-1}$ is viewed as a subgroup of $M_i$ via the embedding $g \mapsto \begin{pmatrix} g & 0 \\ & 1 \end{pmatrix}$. We shall view $G_i, M_i$ $(i \leq n)$ as a subgroup of $G_n$ via the above embeddings.

\subsection{Dual restriction}

\begin{proposition} \label{prop dual restrict} (c.f. \cite{MW12})
Let $\pi_1 \in \mathrm{Alg}(G_{n+1})$ and $\pi_2 \in \mathrm{Alg}(G_n)$. Then 
\[  \mathrm{Ext}^i_{G_{n}}(\pi_1, \pi_2^{\vee}) \cong \mathrm{Ext}^i_{G_{n+1}}(\sigma \times \pi_2, \pi_1^{\vee})
\] 
for some cuspidal representation $\sigma$ of $\mathrm{GL}_2(F)$. 
\end{proposition}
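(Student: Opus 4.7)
The plan is to realize the claimed isomorphism as a composition of a duality-flip identity with a Frobenius-reciprocity / cuspidal-selection move that absorbs an auxiliary cuspidal $\sigma$ of $\mathrm{GL}_2(F)$ into an induction on $G_{n+2}$ and then restricts to $G_{n+1}$. The full chain I aim for is
\[
\mathrm{Ext}^i_{G_n}(\pi_1,\pi_2^{\vee}) \;\cong\; \mathrm{Ext}^i_{G_n}(\pi_2,\pi_1^{\vee}) \;\cong\; \mathrm{Ext}^i_{G_{n+1}}(\sigma\times\pi_2,\pi_1^{\vee}).
\]

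For the first isomorphism, at Hom level ($i=0$) this is just the symmetry of $G_n$-invariant bilinear pairings on $\pi_1\otimes\pi_2$. For arbitrary $i$, I would lift the Hom identity by Bernstein's duality formalism: the smooth contragredient is exact and, inside the relevant finitely generated subcategory, exchanges projective and injective objects, so a bounded projective resolution of $\pi_2$ produces a resolution whose degree-wise Hom groups compute both sides simultaneously via the symmetry of bilinear pairings.

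For the second isomorphism, form $\sigma\times\pi_2\in\mathrm{Alg}(G_{n+2})$ and consider its restriction to $G_{n+1}\subset G_{n+2}$ via the standard embedding $g\mapsto\mathrm{diag}(g,1)$ (as in the chain \eqref{eqn subgroup relations} one level higher). By the Bernstein--Zelevinsky / Mackey geometric lemma, $(\sigma\times\pi_2)|_{G_{n+1}}$ carries a filtration whose successive quotients are compactly induced from $G_{n+1}$-stabilizers of the $G_{n+1}$-orbits on $G_{n+2}/P$, where $P$ is the parabolic underlying the induction $\sigma\times\pi_2$. Bernstein's second adjunction then rewrites each orbit's contribution to $\mathrm{Ext}^i_{G_{n+1}}(\sigma\times\pi_2,\pi_1^{\vee})$ in terms of a Jacquet module of $\pi_1^{\vee}$ paired against $\sigma$ and a Bernstein--Zelevinsky derivative of $\pi_2$. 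Choosing $\sigma$ cuspidal with cuspidal support disjoint from the finite cuspidal supports of $\pi_1$ and $\pi_2$ (possible because $\mathrm{GL}_2(F)$ has infinitely many cuspidal representations while these supports are finite) collapses the whole analysis to a single surviving orbit, whose stabilizer and modular twist combine to reproduce exactly $\mathrm{Ext}^i_{G_n}(\pi_2,\pi_1^{\vee})$.

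The main obstacle will be the geometric-lemma bookkeeping in the second step: one must enumerate the $G_{n+1}$-orbits on $G_{n+2}/P$, keep careful track of their stabilizers and of all the modular and shift characters ($\nu$, $\psi'$, and the twists $\pi\mapsto\pi^{[i]}$, ${}^{[i]}\pi$ appearing in the Bernstein--Zelevinsky formalism), and verify that, after cuspidal selection by $\sigma$, the unique surviving contribution is precisely $\mathrm{Ext}^i_{G_n}(\pi_2,\pi_1^{\vee})$. The key conceptual point is the use of the auxiliary cuspidal $\sigma$ to convert the non-parabolic restriction $G_{n+1}\hookrightarrow G_{n+2}$ into a problem accessible by second adjunction and the Bernstein decomposition on the $\mathrm{GL}_2$-cuspidal block; once this is in place, combining with the Step~1 flip completes the chain of isomorphisms.
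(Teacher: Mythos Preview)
Your overall architecture matches the paper's: both arguments filter $(\sigma\times\pi_2)|_{G_{n+1}}$ by Bernstein--Zelevinsky/Mackey, use a generic choice of the cuspidal $\sigma$ to kill all layers except the one isomorphic to $\mathrm{ind}_{G_n}^{G_{n+1}}\pi_2$, and then pass through Frobenius reciprocity together with a duality ``flip''. The paper phrases this as ``duals and Frobenius reciprocity (see \cite{Pr18})'' rather than separating it into your Steps~1 and~2, but the content is the same.

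The one point that needs repair is your justification of Step~1. The identity $\mathrm{Hom}_{G_n}(\pi_1,\pi_2^{\vee})\cong\mathrm{Hom}_{G_n}(\pi_2,\pi_1^{\vee})$ is \emph{not} simply ``symmetry of invariant bilinear pairings'', because $\pi_1^{\vee}$ here means the $G_{n+1}$-smooth dual restricted to $G_n$, which is strictly smaller than the $G_n$-smooth dual of $\pi_1|_{G_n}$ (the restriction is not admissible). So a $G_n$-invariant pairing on $\pi_1\times\pi_2$ gives a map $\pi_2\to(\pi_1|_{G_n})^{\vee_{G_n}}$, not a priori into $\pi_1^{\vee}|_{G_n}$. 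Relatedly, the assertion that smooth contragredient ``exchanges projective and injective objects'' inside $\mathrm{Alg}(G_n)$ is delicate: contragredient is only left exact on general smooth representations, so dualizing a projective resolution of $\pi_2$ need not yield an exact complex. The fix, which is what \cite{Pr18} does and what the paper is invoking, is to carry out the flip at the $G_{n+1}$ level where $\pi_1$ \emph{is} admissible: one uses Frobenius reciprocity $\mathrm{Ext}^i_{G_n}(\pi_1,\pi_2^{\vee})\cong\mathrm{Ext}^i_{G_{n+1}}(\pi_1,\mathrm{Ind}_{G_n}^{G_{n+1}}\pi_2^{\vee})$, the duality $\mathrm{Ind}_{G_n}^{G_{n+1}}\pi_2^{\vee}\cong(\mathrm{ind}_{G_n}^{G_{n+1}}\pi_2)^{\vee}$, and then the Ext-flip on $G_{n+1}$ with the admissible argument $\pi_1$. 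After this correction your chain is exactly the paper's.
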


\begin{proof}
This is shown in \cite[Proposition 4.1]{Ch20} and we only sketch it. The main idea is that $(\sigma \times \pi_2)|_{G_{n+1}}$ contains $\mathrm{ind}_{G_n}^{G_{n+1}}\pi_2$ as a submodule for a suitable choice of $\sigma$. One can show that $\mathrm{ind}_{G_n}^{G_{n+1}}\pi_2$ is the only composition factor contributing the non-zeroness of  $\mathrm{Ext}^i_{G_{n+1}}(\sigma \times \pi_2, \pi_1^{\vee})$. One then applies duals and Frobenius reciprocity (see \cite{Pr18}) to get to Ext in the LHS form.
\end{proof}

\subsection{Parabolic inductions}

A simple but useful formula is the following:
\begin{lemma} \label{lem GK involution on product}
For $\pi_1 \in \mathrm{Alg}(G_{n_1})$ and $\pi_2 \in \mathrm{Alg}(G_{n_2})$,
\[ \theta(\pi_1 \times \pi_2) \cong \theta(\pi_2) \times  \theta(\pi_1) 
\]
\end{lemma}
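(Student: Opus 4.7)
The plan is to realize the Gelfand--Kazhdan involution's effect on a parabolic induction in two steps: first transport the inducing parabolic via $\theta$, and then apply a standard Weyl conjugation that swaps the two Levi blocks.

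First I would establish (or recall) the general formula that for any parabolic $P = LN$ of $G_n$ with Levi $L$ and any $\sigma \in \mathrm{Alg}(L)$,
\[ \theta(\mathrm{Ind}_P^{G_n} \sigma) \cong \mathrm{Ind}_{\theta(P)}^{G_n} \theta(\sigma), \]
which follows directly from unwinding the definitions: $\theta(g) = g^{-t}$ carries $P$ to its opposite parabolic $\overline{P}$ with the \emph{same} Levi blocks, preserves $L$, preserves the modulus character, and carries the invariance condition for $P$-inducion into the invariance condition for $\overline{P}$-induction. Applied to our $P = LN$ with $L \cong G_{n_1} \times G_{n_2}$ and $\sigma = \pi_1 \boxtimes \pi_2$ this gives
\[ \theta(\pi_1 \times \pi_2) \;\cong\; \mathrm{Ind}_{\overline{P}}^{G} \bigl(\theta(\pi_1) \boxtimes \theta(\pi_2)\bigr), \]
where $\overline{P}$ is the lower-triangular parabolic with Levi blocks of sizes $(n_1, n_2)$ in that order.

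Next I would conjugate by the Weyl element $w = \begin{pmatrix} 0 & I_{n_2} \\ I_{n_1} & 0 \end{pmatrix}$. A direct block computation shows that $w \overline{P} w^{-1}$ is the standard upper-triangular parabolic $P'$ whose Levi has blocks of sizes $(n_2, n_1)$, and that $w \mathrm{diag}(g_1, g_2) w^{-1} = \mathrm{diag}(g_2, g_1)$. The standard intertwiner $f \mapsto (h \mapsto f(w^{-1}h))$ therefore identifies
\[ \mathrm{Ind}_{\overline{P}}^{G} \bigl(\theta(\pi_1) \boxtimes \theta(\pi_2)\bigr) \;\cong\; \mathrm{Ind}_{P'}^{G} \bigl(\theta(\pi_2) \boxtimes \theta(\pi_1)\bigr) \;=\; \theta(\pi_2) \times \theta(\pi_1), \]
which is the desired isomorphism. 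Combining the two steps gives the lemma.

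No step here is really an obstacle; the only thing requiring care is bookkeeping of the Levi ordering (so that the block swap is actually implemented by the intertwiner) and verifying that $\theta$ preserves the modular character of $P$, which it does because $\theta$ restricts to $g \mapsto g^{-t}$ on each Levi factor and $|\det(g^{-t})| = |\det g|^{-1}$ behaves compatibly with the two opposite modulus characters.
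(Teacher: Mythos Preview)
Your proof is correct. The paper states this lemma without proof (treating it as a well-known formula), but your two-step argument---transport $P$ to $\overline{P}$ via $\theta$, then conjugate by the Weyl element $w$ swapping the two blocks---is exactly the standard one, and indeed the paper later spells out the same isomorphism in the Archimedean setting (Lemma~\ref{lem restriction for right}) as the single map $f \mapsto (g \mapsto \kappa \circ f(w_0 g^{-t}))$, which is just your two intertwiners composed.
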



\subsection{Rankin-Selberg models}
Let $U_r$ be the subgroup of $G_r$ containing all unipotent upper triangular matrices of $G_r$. For $r \geq 0$, let
\[ H^R_{r,n} :=\left\{ \begin{pmatrix} g & 0 & x \\ & 1 & v^t \\ & & u \end{pmatrix} : g \in G_{n-r}, x \in M_{n-r,r}, u \in U_r, v \in F^r \right\} \subset G_{n+1} ,
\]
and let $\bar{H}^R_{r,n}$ is the transpose of $H^R_{r,n}$. Define $\zeta_R: H^R_{r,n}\rightarrow \mathbb{C}$ as
\[ \zeta_R(\begin{pmatrix} g & 0 & x \\ & 1 & v^t \\ & & u \end{pmatrix})=\psi(\begin{pmatrix} 1 & v^t  \\  & u \end{pmatrix} ),
\]
where $\psi$ is a non-degenerate character on $U_{r+1} \cong F^r \rtimes U_r$. (We remark that the inductions here are the normalized ones and so we do not have a normalizing factor  c.f. \cite[Section 5.2]{Ch20}.)

For $\pi \in \mathrm{Alg}(G_{n-r})$, extend $\pi$ to an $H^R_{r,n}$-representation by letting the unipotent radical of $H^R_{r,n}$ acting trivially. The Rankin-Selberg model is defined as:
\[ \mathrm{RS}_r(\pi)= \mathrm{ind}_{H^R_{r,n}}^{G_{n+1}} \pi \otimes \zeta^R
\]
and 
\[ \overline{\mathrm{RS}}_r(\pi) =\mathrm{ind}_{\bar{H}^R_{r,n}}^{G_{n+1}}\pi \otimes \theta(\zeta^R).
\]

For $\pi \in \mathrm{Alg}(G_{n+1})$ and $\pi' \in \mathrm{Alg}(G_{n-r})$, 
\[  m^i(\pi, \pi'{}^{\vee})=\mathrm{dim}~ \mathrm{Ext}^i_{H_{r,n}^R}(\pi \otimes \zeta^R, \pi'{}^{\vee})
\]

We also have the following Gan-Gross-Prasad type reduction for Ext-groups (see \cite{Ch20}).

\begin{proposition} \label{prop multiplicity dual eqn}
Let $\pi \in \mathrm{Alg}(G_{n+1})$ and $\pi' \in \mathrm{Alg}(G_{n-r})$. Then
\[  m^i(\pi, \pi'{}^{\vee}) = m^i(\pi' \times \sigma, \pi^{\vee})
\]
for some $\sigma \in \mathrm{Irr}^c(G_{r+1})$. 
\end{proposition}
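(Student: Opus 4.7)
The plan is to adapt the argument of \prref{dual restrict} to the full Rankin-Selberg setting. The heuristic is: for a cuspidal $\sigma \in \mathrm{Irr}^c(G_{r+1})$ chosen with cuspidal support sufficiently generic (specifically, so that no $\mathbb{Z}$-twist of $\sigma$ appears in $\mathrm{csupp}(\pi)$ or $\mathrm{csupp}(\pi')$), one shows that a compactly-induced Rankin-Selberg module built from $\pi' \otimes \zeta^R$ is the \emph{only} subquotient of (a geometric filtration of) $\pi' \times \sigma$ that contributes to the Ext-groups against $\pi^\vee$, and that this subquotient exactly recomputes the left-hand side.

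First I would translate the left-hand side $m^i(\pi, \pi'^{\vee})$ into an Ext over the ambient group $G_{n+r+1}$. Using the Gelfand-Kazhdan involution $\theta$ together with \leref{GK involution on product} interchanges the two flavours of Rankin-Selberg subgroups $H^R_{r,n}$ and $\bar H^R_{r,n}$, while Frobenius reciprocity / second adjointness rewrites
\[ \mathrm{Ext}^i_{H^R_{r,n}}(\pi \otimes \zeta^R, \pi'^{\vee}) \]
as an Ext over $G_{n+r+1}$ with the contragredient of a compact induction $\mathrm{ind}^{G_{n+r+1}}_{\bar H^R_{r,n}}(\pi' \otimes \theta(\zeta^R))$ on one side and $\pi^\vee$ on the other, up to a twist by $\theta$.

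Next I would analyse $\pi' \times \sigma$ via the Bernstein-Zelevinsky geometric lemma: first along the parabolic whose Levi is $G_n \times G_{r+1}$, then restricted to a suitable mirabolic/Rankin-Selberg subgroup of $G_{n+r+1}$. This produces a finite filtration whose successive quotients are indexed by the relevant double cosets. The open stratum contributes precisely the compactly induced module appearing in Step~1, whereas each remaining stratum carries, in its cuspidal support, a nontrivial $\mathbb{Z}$-shift of $\sigma$ sitting in a position dictated by the geometric lemma.

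The main obstacle is the cuspidal-support bookkeeping on the non-open strata: one must verify that once $\sigma$ is chosen outside the finite set of cuspidal representations whose twists appear in $\pi$ or $\pi'$, every closed-stratum subquotient has cuspidal support disjoint from $\mathrm{csupp}(\pi^\vee)$. Given this disjointness, Bernstein's decomposition of $\mathrm{Alg}(G_{n+r+1})$ by cuspidal support forces $\mathrm{Ext}^i$ against $\pi^\vee$ to vanish in every degree on those strata, so that the long exact sequence collapses. The open stratum then both computes the left-hand side (by the Step~1 identification) and equals $m^i(\pi' \times \sigma, \pi^\vee)$ after unwinding the induction, giving the asserted identity.
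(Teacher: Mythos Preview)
Your proposal is correct and follows essentially the same route the paper indicates: the paper does not reprove this proposition here but refers to \cite{Ch20}, and the sketch given for \prref{dual restrict} (realize the relevant compact induction as the open-orbit piece of a Bernstein--Zelevinsky/Mackey filtration of $\pi'\times\sigma$, choose $\sigma$ cuspidal away from $\mathrm{csupp}_{\mathbb Z}(\pi)\cup\mathrm{csupp}_{\mathbb Z}(\nu^{1/2}\pi')$ so the remaining strata lie in Bernstein components orthogonal to $\pi^{\vee}$, then apply Frobenius reciprocity and duality) is exactly what you outline. One small sharpening: because $\sigma$ is cuspidal, its intermediate Bernstein--Zelevinsky derivatives $\sigma^{(k)}$ vanish for $0<k<r+1$, so the filtration in fact collapses to just two nonzero layers (the open one and the $k=0$ layer carrying $\nu^{1/2}\sigma$), rather than a family indexed by $\mathbb Z$-shifts of $\sigma$; the vanishing of the closed layer then comes simply from $\nu^{1/2}\sigma\notin\mathrm{csupp}(\pi^{\vee})$.
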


\subsection{Fourier-Jacobi models}

Define $\zeta^F$(resp. $\widehat{\zeta}^F$) to be the space $S(F^n)$ of Schwartz functions on $F^n$ with the action of $G_n$ given by: for $f \in S(F^n)$, 
\[  (g.f)(v) =\nu^{-1/2}(g)f(g^{-1}.v) 
\]
\[ (\mbox{resp.  $\quad (g.f)(v) = \nu^{1/2}(g)f(g^t.v)$} . )
\]
In particular, $\theta(\zeta^F) \cong \widehat{\zeta}^F$. Indeed, as shown in \cite[Proposition 6.1]{Ch20}, $\widehat{\zeta}^F\cong \zeta^F$.

For $\pi \in \mathrm{Alg}(G_n)$, we shall refer $\pi \otimes \zeta^F$ to the Fourier-Jacobi model of $\pi$ with equal rank \cite{GGP12}. 


\subsection{Left-right filtrations: parabolic form}



We now give the left and right filtrations that we need (c.f. \cite[Sections 5 and 6]{Ch21}):

\begin{proposition} \label{prop right filtration p} \cite[Section 5]{Ch20}
Let $\pi_1 \in \mathrm{Alg}(G_{n_1})$ and let $\pi_2 \in \mathrm{Alg}(G_{n_2})$. Then $(\pi_1 \times \pi_2)|_{G_{n_1+n_2-1}}$ admits a filtration with successive quotients:
\[  \pi_1^{[0]} \times (\pi_2|_{G_{n_2-1}}), \quad  \pi_1^{[1]} \times (\pi_2\otimes \zeta^F)
\]
and for $k \geq 2$, 
\[  \pi_1^{[k]} \times \mathrm{RS}_{k-2}(\pi_2)
\]
\end{proposition}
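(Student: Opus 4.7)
The proof uses a Mackey / geometric-lemma-style decomposition for the restriction of a parabolic induction, combined with the Bernstein--Zelevinsky derivatives theory. Write $n=n_1+n_2$, $P=P_{n_1,n_2}=LN$, $H=G_{n-1}$, and realize $\pi_1\times\pi_2=\mathrm{Ind}_P^{G_n}(\pi_1\boxtimes\pi_2)$. The strategy is to analyze $(\pi_1\times\pi_2)|_H$ by stratifying $G_n$ under the joint action of $P$ on the left and $H$ on the right, refining the double-coset stratification $P\backslash G_n/H$ through the chain $H\subset M_n\subset G_n$. Each stratum is indexed by an integer $k\in\{0,1,\ldots,n_1\}$ that records the position of the $P$-stable subspace $W=F^{n_1}$ relative to the flag $(\langle e_n\rangle,\{x_n=0\})$ defining $H$, yielding a filtration of $(\pi_1\times\pi_2)|_H$ with $n_1+1$ successive quotients.

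For each $k$, I would compute the stabilizer of a representative in the $k$-th stratum and apply Frobenius reciprocity. The unipotent radical of the stabilizer carries a non-degenerate character $\psi'$ induced from the upper-triangular structure of $P$ and $H$; taking $\psi'$-coinvariants on the $\pi_1$-factor produces the $k$-th shifted right derivative $\pi_1^{[k]}$, since $\pi^{[k]}=\nu^{1/2}\cdot\pi^{(k)}$ is exactly the coinvariant functor with the appropriate modular twist. On the $\pi_2$-side, the complementary part of the stabilizer intersected with $H$ has three distinct shapes: for $k=0$, it is the direct product $L\cap H=G_{n_1}\times G_{n_2-1}$, giving $\pi_2|_{G_{n_2-1}}$; for $k=1$, the unipotent $V_n$ of $M_n$ contributes a Heisenberg-type twist producing $\pi_2\otimes\zeta^F$; and for $k\geq 2$, the stabilizer's unipotent part is conjugate to the Rankin--Selberg subgroup $H^R_{k-2,n_2}$ with the character matching $\zeta^R$, so compact induction produces $\mathrm{RS}_{k-2}(\pi_2)$. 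Assembling, the $k$-th successive quotient is the parabolic product $\pi_1^{[k]}\times(\text{the }\pi_2\text{-factor})$, using transitivity of compact induction.

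The main obstacle is the identification on the $\pi_2$-side for $k\geq 2$: one must verify that the stabilizer of a representative point, intersected with $H$, is conjugate to $H^R_{k-2,n_2}$ and that the character appearing in the Mackey decomposition matches $\zeta^R$ without spurious modular twist. This involves delicate bookkeeping of the $\delta^{1/2}$-factors between the normalized parabolic induction, the normalized derivatives $\pi^{[k]}$ and the Rankin--Selberg model $\mathrm{RS}_{k-2}$; although ultimately a direct calculation, it is easy to miscompute without a careful normalization convention. A secondary technical point is to verify that the filtration is actually exact (so that the Mackey pieces genuinely fit together as successive quotients rather than just as associated graded), which follows from the $\ell$-space structure of the double-coset stratification and the exactness of compact induction in the $p$-adic smooth setting.
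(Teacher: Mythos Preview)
Your proposal is correct and follows essentially the same approach as the paper: the paper's sketch invokes the Bernstein--Zelevinsky machinery directly---first \cite[Proposition~4.13(a)]{BZ77} to peel off the top layer $\pi_1^{[0]}\times(\pi_2|_{G_{n_2-1}})$, then the derivative filtration \cite[Proposition~3.2(e)]{BZ77}, and finally \cite[Lemma~3.4]{Ch20} to rewrite the remaining pieces in the Rankin--Selberg form---which is exactly the Mackey/orbit decomposition you describe, just cited rather than recomputed. Your identification of the delicate point (matching the $k\geq 2$ stabilizer with $H^R_{k-2,n_2}$ and tracking the modular twists) is precisely what \cite[Lemma~3.4]{Ch20} handles.
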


\begin{proof}
We only sketch the proof. One first applies the exact sequence in \cite[Proposition 4.13(a)]{BZ77} to obtain the term $\pi_1^{[0]} \times (\pi_2|_{G_{n_2-1}})$ for the top layer, and then applies the Bernstein-Zelevinsky filtration \cite[Proposition 3.2(e)]{BZ77}. One also has to use \cite[Lemma 3.4]{Ch20} to change to the stated form.
\end{proof}

The 'left' version is as follows:

\begin{proposition} \label{prop left filtration}
Let $\pi_1 \in \mathrm{Alg}(G_{n_1})$ and let $\pi_2 \in \mathrm{Alg}(G_{n_2})$. Then $(\pi_1 \times \pi_2)|_{G_{n_1+n_2-1}}$ admits a filtration with successive subquotients:
\[  (\pi_1|_{G_{n_1-1}}) \times {}^{[0]}\pi_2, \quad  (\pi_2 \otimes \widehat{\zeta}^F) \times {}^{[1]}\pi_2
\]
and for $k \geq 2$,
\[  \overline{\mathrm{RS}}_{k-2}(\pi_2) \times {}^{[k]} \pi_2 
\]
\end{proposition}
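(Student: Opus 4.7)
The plan is to deduce the left filtration from the right filtration of Proposition \ref{prop right filtration p} by conjugating everything with the Gelfand-Kazhdan involution $\theta$. Since $\theta$ is an exact self-equivalence of $\mathrm{Alg}(G_n)$, and the embedded subgroup $G_{n_1+n_2-1}$ (the top-left block) is stable under transpose-inverse, restriction to this subgroup commutes with $\theta$. I would apply Proposition \ref{prop right filtration p} to the pair $(\theta(\pi_2),\theta(\pi_1))$ in place of $(\pi_1,\pi_2)$, obtaining a filtration of $(\theta(\pi_2)\times\theta(\pi_1))|_{G_{n_1+n_2-1}}$, and then apply $\theta$ to the whole filtration. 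Exactness of $\theta$ preserves the filtration, and by Lemma \ref{lem GK involution on product} the ambient module is transported back to $\pi_1\times\pi_2$, producing a filtration of $(\pi_1\times\pi_2)|_{G_{n_1+n_2-1}}$.

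The identification of successive quotients relies on four routine compatibilities of $\theta$ with the constructions in play. First, $\theta$ reverses parabolic products by Lemma \ref{lem GK involution on product}. Second, since ${}^{(i)}\sigma$ is defined as $\theta(\theta(\sigma)^{(i)})$ and $\theta(\nu^a\cdot\tau)\cong \nu^{-a}\cdot\theta(\tau)$, one easily obtains $\theta(\theta(\sigma)^{[k]})\cong {}^{[k]}\sigma$. Third, $\theta(\zeta^F)\cong \widehat{\zeta}^F$ is recorded in the preliminaries. Fourth, $\theta$ carries $H^R_{r,n}$ to $\overline{H}^R_{r,n}$ and $\zeta^R$ to $\theta(\zeta^R)$, which yields $\theta(\mathrm{RS}_r(\sigma))\cong \overline{\mathrm{RS}}_r(\theta(\sigma))$ directly from the definitions.

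Assembling these, the top quotient $\theta(\pi_2)^{[0]}\times(\theta(\pi_1)|_{G_{n_1-1}})$ of the right filtration becomes $(\pi_1|_{G_{n_1-1}})\times {}^{[0]}\pi_2$ under $\theta$; the second quotient $\theta(\pi_2)^{[1]}\times(\theta(\pi_1)\otimes\zeta^F)$ becomes $(\pi_1\otimes\widehat{\zeta}^F)\times {}^{[1]}\pi_2$; and for $k\geq 2$ the quotient $\theta(\pi_2)^{[k]}\times\mathrm{RS}_{k-2}(\theta(\pi_1))$ becomes $\overline{\mathrm{RS}}_{k-2}(\pi_1)\times {}^{[k]}\pi_2$. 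This gives the claimed filtration.

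I do not anticipate any essential obstruction; the argument is effectively a bookkeeping translation of Proposition \ref{prop right filtration p} through $\theta$. The small point requiring care is that the $\nu^{\pm 1/2}$ twists hidden in the shifted derivatives get inverted by $\theta$ with the correct sign, and that the character $\zeta^R$ in the Rankin-Selberg induction is transported to exactly $\theta(\zeta^R)$ without any spurious modulus factor. Both are immediate from unwinding the definitions of $[k]$, $\mathrm{RS}_r$, and $\overline{\mathrm{RS}}_r$ recorded in the preliminaries.
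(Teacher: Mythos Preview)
Your proposal is correct and follows essentially the same approach as the paper: apply Proposition~\ref{prop right filtration p} to $\theta(\pi_2)\times\theta(\pi_1)$, then transport the resulting filtration back through $\theta$ using Lemma~\ref{lem GK involution on product} and the listed compatibilities. Note also that your identifications $(\pi_1\otimes\widehat{\zeta}^F)\times{}^{[1]}\pi_2$ and $\overline{\mathrm{RS}}_{k-2}(\pi_1)\times{}^{[k]}\pi_2$ are what the argument actually produces; the occurrences of $\pi_2$ in the first factors of the stated proposition are evidently typographical slips for $\pi_1$.
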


\begin{proof}
Set $G=G_{n_1+n_2-1}$. We first consider the restriction $\theta((\pi_1 \times \pi_2)|_{G})$. Since 
\[  \theta((\pi_1 \times \pi_2 )|_{G}) \cong \theta(\pi_1 \times\pi_2)|_{G},
\]
Lemma \ref{lem GK involution on product} gives
\[ \theta((\pi_1 \times \pi_2 )|_{G})  \cong (\theta(\pi_2)\times \theta(\pi_1))|_G .
\]
By Proposition \ref{prop right filtration p}, we obtain a filtration with successive subquotients of the form:
\[ \theta(\pi_2)^{[0]} \times (\theta(\pi_1)|_{G_{n_1-1}}), \quad \theta(\pi_2)^{[1]} \times (\theta(\pi_1)\otimes \zeta^F)
\]
\[ \theta(\pi_2)^{[k]} \times (\mathrm{RS}_{k-2}(\theta(\pi_1))).
\]
Now applying $\theta$-action again, we obtain the desired filtration.
\end{proof}

\section{Proof of Theorem \ref{thm multi standard}}

\subsection{Proof of Theorem \ref{thm multi standard}}

Let $\pi=\lambda(\mathfrak m)$ be a standard representation for some $\mathfrak m=\left\{ \Delta_1, \ldots, \Delta_r\right\} \in \mathrm{Mult}_n$. Similarly, let $\pi'=\lambda(\mathfrak m')$ be a standard representation for $\mathfrak m'=\left\{ \Delta'_1, \ldots, \Delta'_s \right\} \in \mathrm{Mult}_{n-1}$. 

Our goal is to show that
\[  \mathrm{Hom}_{G_n}(\pi, \pi'{}^{\vee}) \cong \mathbb{C}
\]
and, for $i \geq 1$,
\[  \mathrm{Ext}^i_{G_n}(\pi, \pi'{}^{\vee}) =0
\]

Two segments $\Delta$ and $\Delta'$ are said to be in the same cuspidal line if $(\nu^a\Delta) \cap \Delta' \neq \emptyset$ for some $a \in \mathbb Z$. Let $L^*:=L(\nu^{1/2}\mathfrak m, \mathfrak m'{}^{\vee})$ be the number of pairs $(\Delta, \Delta')$ in $(\nu^{1/2}\mathfrak m) \times \mathfrak m'{}^{\vee}$ such that $\Delta$ and $\Delta'$ are in the same cuspidal line. We shall prove the theorem by an induction on $L^*$. 

Let $\Pi_n$ be the Gelfand-Graev representation of $G_n$ (see \cite{CS19}). When $L^*=0$, this follows from the Bernstein-Zelevinsky filtration and comparing central characters that:
\[  \mathrm{Ext}^i_{G_n}(\pi , \pi'{}^{\vee}) \cong \mathrm{Ext}^i_{G_n}(\Pi_n, \pi'{}^{\vee})
\] 
and we omit the details in this case (see \cite{CS21,Ch21,Ch20} for similar computations). Now the vanishing higher Exts for the latter term follow from that $\Pi_n$ is projective \cite{CS19}, and the multiplicity one for Hom follows from the multiplicity one for Whittaker models \cite{GK71} (also see \cite{Sh74}).

We now assume that $L^*>0$. Let $\rho \in \mathrm{Irr}^c$ such that $\rho \in \nu^{1/2}\Delta_l$ for some $l$ and $\nu^{c}\rho \in \Delta_k'$ for some $k$. We choose a maximal $\rho^* \in \mathrm{csupp}_{\mathbb Z}(\rho)$ in $\mathrm{csupp}(\nu^{1/2}\pi) \cup \mathrm{csupp}(\pi'{}^{\vee})$, where the ordering is $<$ defined in Section \ref{ss notations}. It is a simple observation that the maximality implies $\rho^* \cong \nu^{1/2}b(\Delta_l)$ for some $l$ or $\rho^* \cong b(\Delta_k'{}^{\vee})$ for some $k$. \\

\noindent
{\bf Case 1:} $ \rho^* \not\cong b(\Delta_p'{}^{\vee})$ for any $p$ and $\rho^* \cong \nu^{1/2}b(\Delta_l)$ for some $l$. Relabelling if necessary, we may and shall assume that $l=1$.
The maximality of $\rho^*$ gives that
\begin{equation} \label{eqn non support condition case 1}
\nu^{1/2}b(\Delta_1) \notin \mathrm{csupp}(\lambda(\mathfrak m')^{\vee}). 
\end{equation}

It follows from \cite{Ze80} that $\mathrm{St}(\Delta) \times \mathrm{St}(\Delta') \cong \mathrm{St}(\Delta')\times \mathrm{St}(\Delta)$ for any two unlinked segments $\Delta$ and $\Delta'$. Hence, we can write
\[  \lambda(\mathfrak m)=\mathrm{St}(\Delta_1) \times \lambda(\widetilde{\mathfrak m}),
\]
where $\widetilde{\mathfrak m} =\mathfrak m\setminus \left\{ \Delta_1 \right\}$. 


In this case, we observe that the cuspidal support of $\mathrm{St}(\Delta_1)^{[0]}$ contains $\nu^{1/2}b(\Delta_1)$. Thus,
\[  \mathrm{Ext}^i_{G_n}(\mathrm{St}(\Delta_1)^{[0]} \times (\lambda(\widetilde{\mathfrak m})|_{G_{m-1}}), \lambda(\mathfrak m')^{\vee}) \cong\mathrm{Ext}^i(\mathrm{St}(\Delta_1)^{[0]}\boxtimes (\lambda(\widetilde{\mathfrak m})|_{G_{m-1}}), (\lambda(\mathfrak m')^{\vee})_{N^-}) =0 ,
\]
where $m=n(\lambda(\widetilde{\mathfrak m}))$, the first isomorphism follows from second adjointness of Frobenius reciprocity \cite{Pr18} with a certain opposite unipotent radical $N^-$, and the second equality follows from comparing cuspidal support with (\ref{eqn steinberg derivative}) (at the point $\nu^{1/2}b(\Delta_1)$). Similar argument gives that for all $i$,
\[  \mathrm{Ext}^i_{G_n}(\mathrm{St}(\Delta_1)^{[1]}\times (\lambda(\widetilde{\mathfrak m})\otimes \zeta^F), \lambda(\mathfrak m')^{\vee})=0 ,
\]
and $2 \leq a <i^*$,
\[  \mathrm{Ext}^i_{G_n}(\mathrm{St}(\Delta_1)^{[a]}\times \mathrm{RS}_{a-2}(\lambda(\widetilde{\mathfrak m})), \lambda(\mathfrak m')^{\vee})=0 .
\]
Thus a standard long exact sequence argument reduces to:
\[ \mathrm{Ext}^i_{G_n}(\mathrm{St}(\Delta_1)\times \lambda(\mathfrak m), \lambda(\mathfrak m')^{\vee}) \cong \mathrm{Ext}^i_{G_n}(\mathrm{RS}_{i^*-2}(\lambda(\mathfrak m)), \lambda(\mathfrak m')^{\vee}) 
\]
The latter term is equal to $m^i(\sigma \times \lambda(\mathfrak m), \lambda(\mathfrak m')^{\vee})$ for some $\sigma \in \mathrm{Irr}^c$ with $\sigma \notin \mathrm{csupp}_{\mathbb{Z}}(\lambda(\mathfrak m))\cup \mathrm{csupp}_{\mathbb Z}(\nu^{-1/2}\lambda(\mathfrak m')^{\vee})$ by Proposition \ref{prop multiplicity dual eqn} and Frobenius reciprocity. Now by induction on $N^*$, we have that $m^i(\sigma \times \lambda(\mathfrak m), \lambda(\mathfrak m')^{\vee})=0$ for $i >0$ and $=1$ for $i=0$.  \\


{\bf Case 2:} $\rho^* \cong b(\Delta_k'{}^{\vee})$ for some $k$. It follows from the maximality of $\rho^*$ that $\nu^{1/2}b(\Delta_k'{}^{\vee}) \notin \mathrm{csupp}(\lambda(\mathfrak m))$. Rephrasing the condition, it is equivalent to:
\begin{equation} \label{eqn case 2 main}  \nu^{-1/2}a(\Delta_k') \notin \mathrm{csupp}(\lambda(\mathfrak m)^{\vee})
\end{equation}

Now, by Proposition \ref{prop dual restrict}, it suffices to show that 
\[  \mathrm{Ext}^i_{G_{n+1}}(\sigma \times \lambda(\mathfrak m'), \lambda(\mathfrak m)^{\vee})\cong \left\{ \begin{array}{c c} \mathbb C & \mbox{ if $i=0$ } \\ 0 & \mbox{ if $i>0$ } \end{array} \right.
\]
for some cuspidal $\sigma \not\in \mathrm{csupp}_{\mathbb Z}(\nu^{-1/2}\lambda(\mathfrak m)^{\vee})\cup \mathrm{csupp}_{\mathbb Z}(\lambda(\mathfrak m'))$. Note that $\sigma \times \lambda(\mathfrak m')$ is still a standard module $\lambda(\mathfrak m'')$ with $\mathfrak m''=\mathfrak m' \cup \left\{ [\sigma] \right\}$. Note that 
\begin{align*}
 |\mathrm{csupp}(\nu^{1/2}\lambda(\mathfrak m''))\cap \mathrm{csupp}(\lambda(\mathfrak m)^{\vee})| &=|\mathrm{csupp}(\nu^{1/2}\lambda(\mathfrak m'))\cap \mathrm{csupp}(\lambda(\mathfrak m)^{\vee})| \\
  & = |\mathrm{csupp}(\nu^{-1/2}\lambda(\mathfrak m')^{\vee})\cap \mathrm{csupp}(\lambda(\mathfrak m))| \\
	& = |\mathrm{csupp}(\lambda(\mathfrak m')\cap \mathrm{csupp}(\nu^{1/2}\lambda(\mathfrak m))| \\
	&= L^*
	\end{align*}
Similar as in Case 1, we have that 
\[  \lambda(\mathfrak m') \cong \lambda(\widetilde{\mathfrak m}')\times \mathrm{St}(\Delta_k'),
\]
where $\widetilde{\mathfrak m}'=\mathfrak m' \setminus \left\{ \Delta_k' \right\}$. Now we can show (\ref{eqn case 2 main}) by a similar inductive argument as in Case 1 with the replacement of (\ref{eqn non support condition case 1}) by (\ref{eqn case 2 main}), and the replacement of Proposition \ref{prop right filtration p} by Proposition \ref{prop left filtration}.




\subsection{Proof of Corollary \ref{cor mult one original}}
Let $\pi_1 \in \mathrm{Irr}(G_{n+1})$ and let $\pi_2\in \mathrm{Irr}(G_n)$. Let $\mathfrak m_1, \mathfrak m_2 \in \mathrm{Mult}$ such that $\pi_1 \cong \mathrm{St}(\mathfrak m_1)$ and $\pi_2 \cong \mathrm{St}(\mathfrak m_2)^{\vee}$ respectively. Then we have a surjection 
\[  \lambda(\mathfrak m_1) \rightarrow \pi_1 ,
\]
and an injection 
\[  \pi_2 \hookrightarrow \lambda(\mathfrak m_2)^{\vee} .
\]
This gives that 
\[  \mathrm{dim}~\mathrm{Hom}_{G_n}(\pi_1, \pi_2) \leq \mathrm{dim}~\mathrm{Hom}_{G_n}(\lambda(\mathfrak m_1), \lambda(\mathfrak m_2)^{\vee}) = 1 .
\]

\subsection{Generic parabolically induced modules}

As mentioned in introduction, Theorem \ref{thm multi standard} does not hold for general generic parabolically induced modules. Nevertheless, we have the following result c.f. \cite{JPSS83, JS83}:

\begin{corollary} \label{cor non-zero generic para}
Let $\Delta_1, \ldots , \Delta_k$ be segments with sum of their absolute lengths equal to $n+1$, and let $\Delta_1', \ldots, \Delta_l'$ be segments with sum of their absolute lengths equal to $n$. Let
\[ \lambda =\mathrm{St}(\Delta_1)\times \ldots \times \mathrm{St}(\Delta_k) ,
\]
\[  \lambda' =\mathrm{St}(\Delta_1')\times \ldots \times \mathrm{St}(\Delta_l') .
\]
Then 
\[ \mathrm{Hom}_{G_n}(\lambda, \lambda'{}^{\vee}) \neq 0 .
\]
\end{corollary}

\begin{proof}

\noindent
{\it Claim}: $\mathrm{St}(\Delta_1)\times \ldots \times \mathrm{St}(\Delta_k)$ has a quotient isomorphic to a standard representation. Similarly, $\mathrm{St}(\Delta_1')\times \ldots \times \mathrm{St}(\Delta_l')$ has a quotient isomorphic to a standard representation. \\

Note that the corollary will follow from the claim and Theorem \ref{thm multi standard}. It remains to prove the claim. We only prove the first assertion of the claim and the latter one can be proved similarly.

Two segments $\Delta^1$ and $\Delta^2$ are said to be in opposite order if $\Delta^1$ and $\Delta^2$ are linked; and $b(\Delta^1) <b(\Delta^2)$. Let $N$ be the number of pairs $(\Delta_i, \Delta_j)$ such that $\Delta_i$ and $\Delta_j$ are in opposite order. If $N=0$, then the modules themselves are already standard.   \\

Suppose $N>0$. Now we choose the smallest $i^*$ such that $\Delta_{i^*}$ and $\Delta_{i^*+1}$ are in opposite order. Then we replace the segments $\Delta_{i^*}$ and $\Delta_{i^*+1}$ by the segments $\Delta=\Delta_{i^*}\cup \Delta_{i^*+1}$ and $\overline{\Delta}=\Delta_{i^*}\cap \Delta_{i^*+1}$. Using \cite{Ze80}, we have a surjection:
\[ \lambda \rightarrow \mathrm{St}(\Delta_1)\times \ldots \times \mathrm{St}(\Delta_{i^*-1})\times \mathrm{St}(\Delta) \times \mathrm{St}(\overline{\Delta}) \times \mathrm{St}(\Delta_{i^*+2})\times \ldots \times \mathrm{St}(\Delta_r) .
\]
(As convention, we set $\mathrm{St}(\emptyset)=1$ as a $G_0$-representation.) If any pair of the segments in the last term is not in opposite order, then the restriction of the last term has a non-zero map to $\lambda'{}^{\vee}$ by Theorem \ref{thm multi standard} and so does $\lambda|_{G_n}$. 

If some pair of the segments in the last is in opposite order, then we repeat the union-intersection process on the segments above. This process will terminate (e.g. considering the ordering in \cite[Chapter 7]{Ze80}). It means that we showed that $\lambda$ has a quotient such that any two segements are not in opposite order. Hence we are done for the claim.

\end{proof}

\begin{remark}
As shown in the proof of Corollary \ref{cor non-zero generic para}, the non-zero Hom comes from Hom between the images of $\lambda$ and $\lambda'$ embedded to the Whittaker model. Hence, the non-zero element Hom in the above proof indeed agrees with the one in \cite{JPSS83}.
\end{remark}

\subsection{Zelevinsky standard representations}

For a segment $\Delta=[\nu^a\rho, \nu^b\rho]$, let $\langle \Delta \rangle$ be the unique quotient of 
\[  \nu^{a}\rho \times \ldots \times \nu^b\rho .
\]
Let $\mathfrak m=\left\{ \Delta_1, \ldots, \Delta_r \right\} \in \mathrm{Mult}$ with the labelling satisfying that for $i<j$, $\Delta_i \not< \Delta_j$. The representation
\[  \langle \Delta_1 \rangle \times \ldots \times \langle \Delta_r \rangle
\]
is called a Zelevinsky standard representation. One can prove the following by a similar method as in \cite[Proposition 2.3]{Ch21} and we omit the details:

\begin{proposition} \label{prop embedding std module}
Let $\mathfrak m \in \mathrm{Mult}$. Then there exists a multisegment $\mathfrak m'$ whose all segments are singletons such that $\zeta(\mathfrak m')$ maps onto $\zeta(\mathfrak m)$. 
\end{proposition}

\begin{corollary}
Let $\pi$ and $\pi'$ be Zelevinsky standard representations of $G_{n+1}$ and $G_n$ respectively. Then
\[ \mathrm{dim}~\mathrm{Hom}_{G_n}(\pi, \pi'{}^{\vee}) \leq 1 
\]
\end{corollary}

\begin{proof}
This follows from Theorem \ref{thm multi standard}, Proposition \ref{prop embedding std module} and the fact \cite{Ze80} that $\zeta(\mathfrak m')\cong \lambda(\mathfrak m')$ if $\mathfrak m'$ is a multisegment whose all segments are singletons.
\end{proof}

\begin{remark}
It is easy to construct examples (e.g. using characters) that 
\[ \mathrm{Hom}_{G_n}(\pi, \pi'{}^{\vee})=0,\] 
and examples that $\mathrm{Hom}_{G_n}(\pi, \pi'{}^{\vee})\cong \mathbb C$, where $\pi$ and $\pi'$ are Zelevinsky standard representations of $G_{n+1}$ and $G_n$.

The higher Ext for Zelevinsky standard representations also does not vanish in general. For example, let $\pi$ and $\pi'$ be trivial representations of $G_{n+1}$ and $G_n$ respectively. Then $\mathrm{Ext}^1_{G_n}(\pi, \pi') \cong \mathbb{C}$. 
\end{remark}



\section{Euler-Poincar\'e pairing and standard modules} \label{ss kazhdan lusztig coe}

\subsection{Euler-Poincar\'e pairing}

Let $\mathrm{Alg}_f(G_n)$ be the full subcategory of $\mathrm{Alg}(G_n)$ of representations of finite length. Let $\pi \in \mathrm{Alg}_f(G_n)$. We shall write $[\pi]$ for the corresponding element in the Grothendieck group $K(\mathrm{Alg}_f(G_n))$. It follows from the Langlands classification that standard modules form a basis for $K(\mathrm{Alg}_f(G_n))$. For $\pi \in \mathrm{Irr}(G_n)$, write
\begin{align} \label{eqn gorth express} [\pi] = \sum_{\pi' \in \mathrm{Irr}(G_n)} m_{\pi, \pi'} [I(\pi')] \quad (m_{\pi, \pi'}\in \mathbb Z) ,
\end{align}
where $I(\pi')$ be the corresponding standard module with the irreducible quotient isomorphic to $\pi'$.
We shall call $m_{\pi, \pi'}$ to be a Kazhdan-Lusztig coefficient, which is possibly negative. For a fixed $\pi$, $\pi$ can only appear in a finite number of $I(\pi')$ and so there is only finitely non-zero $m_{\pi,\pi'}$.

We now recall the Euler-Poincar\'e pairing due to Dipendra Prasad:

\begin{theorem}\cite{Pr18} \label{thm euler poincare}
Let $\pi_1 \in \mathrm{Alg}_f(G_{n+1})$ and let $\pi_2 \in \mathrm{Alg}_f(G_n)$. Define
\[  \mathrm{EP}(\pi_1, \pi_2)=\sum_i (-1)^i \mathrm{dim}~ \mathrm{Ext}^i_{G_n}(\pi_1, \pi_2) .
\]
Then $\mathrm{EP}(\pi_1, \pi_2) =\mathrm{dim}~\mathrm{Wh}(\pi_1) \cdot \mathrm{dim}~\mathrm{Wh}(\pi_2)$, where $\mathrm{Wh}(\pi_1)$ (resp. $\mathrm{Wh}(\pi_2)$) are the Whittaker function spaces of $\pi_1$ and $\pi_2$ for $G_{n+1}$ and $G_n$ respectively.
\end{theorem}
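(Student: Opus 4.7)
The plan is to reduce to the Gelfand-Graev representation $\Pi_n$, for which $\mathrm{Hom}_{G_n}(\Pi_n, \pi) \cong \mathrm{Wh}(\pi)$ and higher Exts vanish by projectivity \cite{CS19}; this gives $\mathrm{EP}(\Pi_n, \pi) = \dim \mathrm{Wh}(\pi)$, so it remains to extract a factor of $\dim \mathrm{Wh}(\pi_1)$ copies of $\Pi_n$ from the first argument.

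First, I would establish that $\mathrm{EP}$ is well defined. This rests on the finiteness of the cohomological dimension of $\mathrm{Alg}_f(G_n)$, which follows from Bernstein's center decomposition together with the Schneider-Stuhler type projective resolutions. Bi-additivity of $\mathrm{EP}$ on short exact sequences is then automatic, so the pairing descends to a bilinear pairing on the Grothendieck groups $K(\mathrm{Alg}_f(G_{n+1})) \times K(\mathrm{Alg}_f(G_n))$. Since standard modules form a basis of $K(\mathrm{Alg}_f(G_m))$ and every standard module of $\mathrm{GL}_m$ is generic (so has Whittaker dimension $1$), the theorem reduces to proving
\[
   \mathrm{EP}(\lambda(\mathfrak m_1), \lambda(\mathfrak m_2)) = 1
\]
for all $\mathfrak m_1 \in \mathrm{Mult}_{n+1}$ and $\mathfrak m_2 \in \mathrm{Mult}_n$.

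Next I would apply the Bernstein-Zelevinsky filtration to $\pi_1|_{G_n}$, using the chain of subgroups in \eqref{eqn subgroup relations}. The successive quotients take the form $\mathrm{ind}_{R_k \cap G_n}^{G_n}\bigl(\pi_1^{(k)} \otimes \psi'\bigr)$ for $k = 0, 1, \ldots, n+1$. The top layer ($k = n+1$) is $\dim \pi_1^{(n+1)} = \dim \mathrm{Wh}(\pi_1)$ copies of $\Pi_n$, and the projectivity of $\Pi_n$ combined with the Whittaker identity above shows that this layer contributes exactly $\dim\mathrm{Wh}(\pi_1) \cdot \dim\mathrm{Wh}(\pi_2)$ to $\mathrm{EP}(\pi_1, \pi_2)$.

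The main obstacle is to verify that every other layer ($0 \le k \le n$) contributes zero. Frobenius reciprocity for compact induction rewrites the $k$-th layer's Ext against $\pi_2$ as an Ext over $R_k \cap G_n$ between $\pi_1^{(k)} \otimes \psi'$ and $\pi_2|_{R_k \cap G_n}$, and a further Bernstein-Zelevinsky filtration of $\pi_2|_{R_k \cap G_n}$ converts this into an $\mathrm{EP}$-pairing at smaller rank between matching BZ derivatives of $\pi_1$ and of $\pi_2$. An induction on $n$ (with the trivial base case $n=0$) then expresses each cross-term as a product of Whittaker dimensions of derivatives, and the alternating sum over $k$ cancels telescopically thanks to the normalization $\pi^{[k]} = \nu^{1/2} \pi^{(k)}$ and the compatibility of derivative functors with parabolic induction. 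Managing this combinatorial cancellation, and ensuring that the indices on the $\pi_1$-side and $\pi_2$-side line up correctly at each rank, is the most delicate part of the argument.
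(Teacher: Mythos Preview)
The paper does not prove this theorem; it is quoted from \cite{Pr18} and used as input in the proof of Corollary~\ref{cor multiplicity kl}. So there is no in-paper argument to compare against.

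Your first two steps are fine: $\mathrm{EP}$ is well defined and bi-additive, and the bottom Bernstein--Zelevinsky layer of $\pi_1|_{G_n}$ is $\dim\mathrm{Wh}(\pi_1)$ copies of $\Pi_n$, contributing $\dim\mathrm{Wh}(\pi_1)\cdot\dim\mathrm{Wh}(\pi_2)$ as claimed.

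The last paragraph has a genuine gap. The layers $L_k$ contribute additively, $\mathrm{EP}(\pi_1,\pi_2)=\sum_k\mathrm{EP}(L_k,\pi_2)$, with no sign depending on $k$; there is no ``alternating sum over $k$'' to telescope. More seriously, your inductive scheme does not apply to the first intermediate layer. For $k=1$ the layer is $\Psi^+(\pi_1^{(1)})$, whose restriction to $G_n$ is, up to a twist by a power of $\nu$, simply the finite-length $G_n$-module $\pi_1^{(1)}$ itself: there is no compact induction here, so no Frobenius step is available, and $\mathrm{EP}_{G_n}(L_1,\pi_2)$ is an \emph{equal-rank} Euler--Poincar\'e pairing of two $G_n$-representations rather than a branching pairing from some $G_{m+1}$ to $G_m$. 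The induction hypothesis says nothing about this term, and it is not in general a product of Whittaker dimensions (nor is it always zero). The genuine cancellation among the intermediate layers requires a separate argument that your outline does not supply.

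Incidentally, within this paper there is a much shorter route once you have reduced to a basis of the Grothendieck group. Take $\pi_1=\lambda(\mathfrak m_1)$ as you do, but in the second variable use the basis of \emph{duals} of standard modules, $\pi_2=\lambda(\mathfrak m_2)^{\vee}$. Since $\lambda(\mathfrak m_2)^{\vee}\cong\mathrm{St}(\Delta_r^{\vee})\times\cdots\times\mathrm{St}(\Delta_1^{\vee})$ is again a product of essentially square-integrable representations, it is generic, so both Whittaker dimensions equal $1$. The identity then reduces to $\mathrm{EP}_{G_n}(\lambda(\mathfrak m_1),\lambda(\mathfrak m_2)^{\vee})=1$, which is precisely Theorem~\ref{thm multi standard}. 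As that theorem is proved in Section~3 without appeal to the present statement, this yields a non-circular derivation of Theorem~\ref{thm euler poincare} from the paper's main result.
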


The following result is probably well-known, but we think it may be interesting to include it in our content.

\begin{corollary} \label{cor multiplicity kl}
Let $\pi \in \mathrm{Irr}(G_n)$. Then
\begin{enumerate}
\item If $\pi$ is generic, then, for all $\pi' \not\cong \pi$, 
\[ m_{\pi, \pi'}=0
\]
and  $m_{\pi,\pi}=1$.
\item If $\pi$ is not generic, then
\[ \sum_{\pi' \in \mathrm{Irr}(G_n) } m_{\pi, \pi'} =0
\]
\end{enumerate}
\end{corollary}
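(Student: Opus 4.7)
The plan is to use Theorems \ref{thm multi standard} and \ref{thm euler poincare} to evaluate $\sum_{\pi'} m_{\pi,\pi'}$ as a single Whittaker dimension. As a preliminary step, I would verify that every standard module has Whittaker dimension exactly one: for any standard $J$ of $G_{n+1}$ and any standard $I(\pi')$ of $G_n$, Theorem \ref{thm multi standard} yields $\mathrm{EP}(J, I(\pi')^{\vee}) = 1$, which by Theorem \ref{thm euler poincare} equals $\dim \mathrm{Wh}(J) \cdot \dim \mathrm{Wh}(I(\pi'))$; since these are non-negative integers, both factors equal $1$.

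Next, I would fix a standard module $J$ of $G_{n+1}$, dualise the given expansion to $[\pi^{\vee}] = \sum_{\pi'} m_{\pi,\pi'} [I(\pi')^{\vee}]$ in $K(\mathrm{Alg}_f(G_n))$, and apply $\mathrm{EP}(J, -)$ to both sides. Bilinearity of $\mathrm{EP}$ on Grothendieck groups (legitimate because the relevant $\mathrm{Ext}$-groups are finite dimensional and vanish in high degree) gives
\[ \mathrm{EP}(J, \pi^{\vee}) \;=\; \sum_{\pi'} m_{\pi,\pi'} \, \mathrm{EP}(J, I(\pi')^{\vee}) \;=\; \sum_{\pi'} m_{\pi,\pi'}, \]
while Theorem \ref{thm euler poincare} applied directly gives $\mathrm{EP}(J, \pi^{\vee}) = \dim \mathrm{Wh}(J) \cdot \dim \mathrm{Wh}(\pi^{\vee}) = \dim \mathrm{Wh}(\pi)$, using that Whittaker dimension is preserved under smooth duality. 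Comparing the two expressions yields $\sum_{\pi'} m_{\pi,\pi'} = \dim \mathrm{Wh}(\pi)$.

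Part (2) is then immediate, since a non-generic irreducible has zero Whittaker dimension. For part (1), the identity alone only gives $\sum_{\pi'} m_{\pi,\pi'} = 1$; to pin down the individual coefficients one additionally invokes the classical Zelevinsky fact that a generic irreducible representation equals its own standard module (the segments in its Zelevinsky datum are pairwise unlinked, making $\lambda(\mathfrak m)$ irreducible). This forces $I(\pi) \cong \pi$ and hence $m_{\pi,\pi'} = \delta_{\pi,\pi'}$. The main obstacle is really just this last classical input; the EP-based computation itself is mechanical once Theorem \ref{thm multi standard} is in hand.
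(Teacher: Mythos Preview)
Your argument is correct and follows essentially the same plan as the paper: expand $[\pi]$ in the standard basis, use bilinearity of $\mathrm{EP}$ on the Grothendieck group, and reduce to the fact that $\mathrm{EP}$ of a standard module against a generic test object equals $1$. The only cosmetic difference is that the paper places $\pi$ in the first slot and pairs against a fixed generic irreducible $\pi_0\in\mathrm{Irr}(G_{n-1})$ (which equals its own standard module), whereas you dualise and pair against a standard $J$ of $G_{n+1}$ in the first slot; also, the paper simply cites Theorem~\ref{thm euler poincare} for $\mathrm{EP}(I(\pi'),\pi_0)=1$ (implicitly using the classical fact that standards have Whittaker dimension one), while you derive this from Theorem~\ref{thm multi standard}.
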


\begin{proof}
(1) follows from that $\pi=I(\pi)$. We now consider (2) and $n \geq 1$. Let $\pi \in \mathrm{Irr}(G_{n})$ be non-generic and let $\pi_0 \in \mathrm{Irr}(G_{n-1})$ be generic. Using Prasad's Euler-Poincar\'e pairing, we have that:
\[ \mathrm{EP}(\pi,\pi_0):= \sum_{i} (-1)^i\mathrm{dim}~ \mathrm{Ext}^i_{G_n}(\pi, \pi_0) =0
\]
Since the Euler-Poincar\'e pairing only depends on the image in Grothendieck group, 
\[ \sum_{\pi'\in \mathrm{Irr}(G_n)}m_{\pi,\pi'}\mathrm{EP}(I(\pi'),\pi_0)= \mathrm{EP}(\pi,\pi_0) = 0
\]
But the term $\mathrm{EP}(I(\pi'), \pi_0)=1$ by Theorem \ref{thm euler poincare} and hence the lemma follows.
\end{proof}

We remark that the Kazhdan-Lusztig coefficients for some special cases such as Speh representations \cite{Ta95, CR08, Ba14} and ladder representations \cite{LM14, BC15} are known, but in general it is difficult to compute. 

\begin{remark}
Corollary \ref{cor multiplicity kl} can be proved by taking the (exact) Whittaker-Jacquet functor i.e. $n$-th derivative on the expression in (\ref{eqn gorth express}). The author would like to thank Rapha\"el Beuzart-Plessis and Erez Lapid for communicating on this. 
\end{remark}

\section{Filtration on discrete series} \label{ss archi case}

Now we consider the Archimedean case. Let $G_{n}=\mathrm{GL}_{n}(\mathbb R)$ in this and next two sections. All the representations in remaining sections are Casselman-Wallach representations i.e. smooth Fr\'echet representations with moderate growth (see \cite{BK14}). Again let $\theta: G_n \rightarrow G_n$ by $\theta(g)=g^{-t}$, which has analogous property as the $p$-adic one (see \cite{Ad14}). We consider $G_i$ as a subgroup of $G_n$ via an obvious analogue embedding in (\ref{eqn subgroup relations}).

We shall denote by $\mathcal S(X)$ for the space of complex-valued Schwartz functions on an affine Nash manifold $X$ in the sense of \cite{dCl91}. In particular, when $X$ is compact, $\mathcal S(X)$ is isomorphic to the space of smooth functions on $X$ \cite[1.3.6(i) Exemples]{dCl91}; and when $X \cong \mathbb R^n$, $\mathcal S(X)$ agrees with the usual definition \cite[1.3.6(iii) Exemples]{dCl91}. 

\subsection{A filtration on discrete series of $\mathrm{GL}_2(\mathbb R)$} \label{ss filtration}

Let $B=B_2$ be the Borel subgroup of $G_2$ containing all upper triangular matrices. Define the (normalized) parabolically induced representation:
\[ I(\epsilon,s_1,s_2):= \mathrm{Ind}_{B}^{G_2} \epsilon \left(\chi_{s_1}\boxtimes \chi_{s_2} \right) ,
\]
where $s_1, s_2 \in \mathbb{C}$ such that $\chi_{s_1}(t_1)=|t_1|^{s_1}$, $\chi_{s_2}(t_2)=|t_2|^{s_2}$ and $\epsilon$ is a unitary character of $B$. 

The discrete series can be constructed from the unique submodule of the (normalized) parabolic induced representation $I(\tau_m, \frac{m-1}{2}, \frac{m+1}{2})$ with $\tau_m$ satisfying
\[ \tau_m(z^*)=(-1)^m,
\]
where $z^*=\mathrm{diag}(-1,-1)$ and $m \in \mathbb{Z}_{\geq 2}$. Denote by $\delta(m)$ the corresponding discrete series $(m \geq 2$), which is independent of $\tau_m$. The above construction for $m=1$ gives the limit of discrete series denoted $\delta(1)$. The corresponding Harish-Chandra module of those $\delta(m)$ has $\mathrm{SO}(2)$-types:
\[  \ldots, e^{-\sqrt{-1}(m+2)\theta}, e^{-\sqrt{-1}m\theta},  e^{\sqrt{-1}m\theta}, e^{\sqrt{-1}(m+2)\theta} , \ldots,
\]
where $e^{\sqrt{-1}j\theta}$ represents the $\mathrm{SO}(2)$-representation with the action given by 
\[  \begin{pmatrix} \cos \alpha & \sin \alpha \\ -\sin \alpha & \cos \alpha \end{pmatrix}. v= e^{\sqrt{-1}j\alpha}.v
\]
Since $\theta(I(\epsilon, \frac{m-1}{2}, \frac{m+1}{2}))\cong I(\epsilon, \frac{m-1}{2}, \frac{m+1}{2})$, we have that $\theta(\delta(m))\cong \delta(m)$.

Let $\chi_1=\epsilon_1\otimes \chi_{s_1}$ and $\chi_2=\epsilon_2\otimes\chi_{s_2}$, where $\epsilon_1(t)=\epsilon(\mathrm{diag}(t,1))$ and $\epsilon_2(t)=\epsilon(\mathrm{diag}(1,t))$. By Borel's lemma, we obtain a filtration:
\[ 0 \rightarrow X\rightarrow    I(\epsilon, s_1,s_2) \rightarrow Y  \rightarrow 0
\]
with $Y$ isomorphic to $\mathbb C[[z]]$ with action characterized by 
\[  \begin{pmatrix} t & \\ & 1 \end{pmatrix} .z^r =\epsilon_1(t)t^r|t|^{1/2+s_1}  z^r 
\]
and $X$ is the space $\mathcal S(\mathbb R)$ of Schwartz functions from $\mathbb{R}$ to $\mathbb{C}$ satisfying 
\[   (\begin{pmatrix} t & \\  & 1 \end{pmatrix}.f)(x)= \epsilon_2(t) |t|^{-1/2+s_2}f(t^{-1}x).
\]


To obtain an analogue of Bernstein-Zelevinsky filtration (c.f. \cite[Proposition 5.12]{BZ76}), we use a Fourier transform to obtain an isomorphism from $\mathcal S(\mathbb{R})$ to $\mathcal S(\mathbb R)$ given by:
\[  \widehat{f}(\eta) =\int_{\mathbb{R}} e^{\sqrt{-1}\eta x}f(x) dx
\]
Let $M_2$ be the mirabolic subgroup in $G_2$. Note that the $M_2$-action on $\mathcal S(\mathbb R)$ induces an action on $\mathcal S(\mathbb R)$ with the $G_1$-action given by:
\[ (t.\widehat{f})(\eta) =|t|^{1/2}\chi_2(t)\widehat{f}(t\eta) .
\]
Now we use Borel's lemma to obtain a filtration on $\mathcal S(\mathbb{R})$, as $M_2$-representations:
\[  0 \rightarrow X_1 \rightarrow X \rightarrow X_2 \rightarrow 0
\]
such that
\begin{enumerate}
\item $X_1$ contains all complex-valued Schwartz functions on $\mathbb{R}$ with all derivatives vanishing at $0$ and $G_1$ acts by a multiplicative translation; and
\item $X_2$ is isomorphic to $\mathbb{C}[[z]]$ such that $t \in \mathbb{R}^{\times}$ acts by
\[  t.z^j=t^{j}|t|^{1/2+s_2} z^j
\]
\end{enumerate}
(Note that $X_1$ is independent of the choice of $\epsilon_2$ and $\chi_2$, which follows by an isomorphism between $\mathcal S(\mathbb{R}^{\times})$ and $\chi \otimes \mathcal S(\mathbb R^{\times})$ given by $f \mapsto (t\mapsto \chi(t)f(t))$.)

In summary, we obtain a $G_1$-filtration on $I(\epsilon,s_1,s_2)$ with successive quotients from top to bottom:
\begin{itemize}
\item for $l\in \mathbb{Z}_{\geq 0}$, $1$-dimensional representations with scalar multiplication by $\epsilon_1(t)t^l|t|^{s_1+\frac{1}{2}}$;
\item for $l \in \mathbb{Z}_{\geq 0}$, $1$-dimensional representations with scalar multiplication by $\epsilon_2(t)t^{l}|t|^{s_2+\frac{1}{2}}$;
\item $X_1$,
\end{itemize}
where $\epsilon_1(t)=\epsilon(\mathrm{diag}(t,1))$ and $\epsilon_2(t)=\epsilon(\mathrm{diag}(1,t))$.

Now we specialize to the case that $s_1=\frac{m-1}{2}$ and $s_2=\frac{-m+1}{2}$ for some positive integer $m$. Then $I(\tau_m, \frac{m-1}{2}, \frac{-m+1}{2})$ has a finite-dimensional module as the quotient. Since the finite-dimensional representation does not admit a Whittaker model, it cannot be a quotient of $X_1$. Hence the finite-dimensional representation occupies the weights $t^{-\frac{m}{2}+1}, \ldots , t^{\frac{m}{2}-1}$. This gives the following:

\begin{proposition} \label{prop filt on discrete series}
The discrete series $\delta(m)$ of $G_2$ admits a $G_1$-filtration whose successive subquotients from top to bottom are:
\begin{itemize}
\item a quotient $Y$, which admits a decreasing filtration $F_l$ with subquotients isomorphic to $2$-dimensional representations of $G_1$ which is the direct sum of the two representations
 \[ t\mapsto |t|^{\frac{m}{2}+l}, \mbox{ and } \quad  t \mapsto \mathrm{sgn}(t)|t|^{\frac{m}{2}+l} \]
 for $l \in \mathbb Z_{\geq 0}$, and moreover, $\varprojlim F_l=Y$.
\item the space $\mathcal S(\mathbb R^{\times})$ of Schwartz functions with $G_1$ acting by $(t.f)(x)=f(xt)$.
\end{itemize}
\end{proposition}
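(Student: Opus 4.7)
The plan is to deduce the proposition by restricting the $G_1$-filtration on the induced representation $I := I(\tau_m, \tfrac{m-1}{2}, \tfrac{-m+1}{2})$ (constructed immediately above) to its submodule $\delta(m)$, via the short exact sequence $0 \to \delta(m) \to I \to F(m) \to 0$, where $F(m)$ is the $m$-dimensional representation of $G_2$. Recall that the filtration on $I$ has, from top to bottom, two countably-indexed families of $1$-dimensional $G_1$-quotients (with characters $\epsilon_1(t) t^l |t|^{m/2}$ and $\epsilon_2(t) t^l |t|^{(2-m)/2}$ for $l \geq 0$), followed by a bottom piece $X_1 \cong \mathcal{S}(\mathbb{R}^\times)$ carrying the multiplicative-translation action.

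The first step is to show $\mathcal{S}(\mathbb{R}^\times) \subseteq \delta(m)$. Since $F(m)$ does not admit a Whittaker model, and $X_1$ arises inside $I$ essentially as a Kirillov/Whittaker-type model, any $G_1$-equivariant surjection $X_1 \twoheadrightarrow F(m)$ would produce a Whittaker functional on $F(m)$, a contradiction; hence $X_1$ lies in $\delta(m)$. A weight matching then forces $F(m)$ to be a quotient of the union of the top and middle $1$-dimensional pieces of $I$: its $m$ weights $t \mapsto t^{-m/2+1+k}$ for $k = 0, \ldots, m-1$ (with the signs dictated by $\tau_m$) are exhausted by the \emph{inner} $m$ characters in the list. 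The remaining $1$-dimensional subquotients --- those with $|t|$-exponent $\geq m/2$, namely $t \mapsto |t|^{m/2+l}$ and $t \mapsto \mathrm{sgn}(t)|t|^{m/2+l}$ for $l \geq 0$ --- then survive inside $\delta(m)$ and jointly form the top piece $Y$, with the decreasing filtration $F_l$ and the identification $\varprojlim F_l = Y$ inherited directly from the $\mathbb{C}[[z]]$ structure on the ``top'' piece of the filtration on $I$ (where $F_l$ is the order-$\geq l$ filtration of formal power series). Assembling these pieces produces the asserted $G_1$-filtration on $\delta(m)$.

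The main obstacle is the Whittaker/Kirillov identification used to place $X_1$ inside $\delta(m)$: one must rigorously rule out $G_1$-equivariant maps from $\mathcal{S}(\mathbb{R}^\times)$ to finite-dimensional characters in the weight range occupied by $F(m)$. The cleanest route is to observe that $X_1$ is the image of the Whittaker functions inside $I$ (i.e.\ the Kirillov model of $I$), so any finite-dimensional quotient of $X_1$ would descend to a finite-dimensional Whittaker quotient of $I$, which $F(m)$ precludes. The subsequent weight bookkeeping, though essentially routine, requires careful tracking of the sign characters obtained from $\tau_m$ restricted to the two copies of $M_1$ inside $B$, in order to verify that precisely the two ``outer'' families $|t|^{m/2+l}$ and $\mathrm{sgn}(t)|t|^{m/2+l}$ remain.
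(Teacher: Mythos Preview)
Your proposal is correct and follows essentially the same route as the paper: use the short exact sequence $0 \to \delta(m) \to I \to F(m) \to 0$, argue that the bottom piece $X_1$ must land in $\delta(m)$ because the finite-dimensional quotient admits no Whittaker model, and then match the remaining $G_1$-weights to see that $F(m)$ exhausts the ``inner'' characters while the ``outer'' ones (exponent $\geq m/2$) survive in $\delta(m)$. One small caveat: your phrasing ``any $G_1$-equivariant surjection $X_1 \twoheadrightarrow F(m)$ would produce a Whittaker functional'' is not quite right as stated, since $G_1$-equivariance alone does not see the unipotent; what makes the argument work (and what the paper implicitly uses) is that the filtration is $M_2$-stable, so the composite $X_1 \hookrightarrow I \twoheadrightarrow F(m)$ is $M_2$-equivariant, and it is the unipotent action on $X_1$ that is incompatible with a finite-dimensional image---your later paragraph on the Kirillov realisation captures this correctly.
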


\section{Bruhat filtrations} \label{ss filtrion from parabolic}

\subsection{Bruhat filtrations} \label{ss bruhat filt}
In this section, we describe the Bruhat filtration \cite{CHM00} in our content. There are some extending work \cite{AG08} and \cite{CSu21} while we shall not use it in this article. We are aware of some relevant work in \cite{Ha20}.

Let $T=T_{n+1}$ be the subgroup of $G_{n+1}$ containing all diagonal matrices. Let $B=B_{n+1}$ be the Borel subgroup of $G_{n+1}$ containing all upper triangular matrices. Let $P$ be the parabolic subgroup of $G_{n+1}$ containing matrices $\mathrm{diag}(a, g)$ for $a \in \mathbb{R}^{\times}$ and $g\in G_n$, and all upper triangular matrices.

The mirabolic subgroup $M_{n+1}$ acts on $B \setminus G_{n+1}$ by right multiplication.  Let $\mathcal Q^*$ be the open $M_{n+1}$-orbit
\[ \mathcal Q^* = B \setminus Pw_0M_{n+1}
\]
\begin{align} \label{eqn long element}
 w_0= \begin{pmatrix}  & 1 \\ I_n & \end{pmatrix}
\end{align}
Note that $\mathcal Q^*$ is invariant under $M_{n+1}$-action. Let $\mathcal R=B\setminus PM_{n+1}$ be the complement of $\mathcal Q^*$ in $B \setminus G_{n+1}$.

Let $\chi$ be a character of $B$. Set $\mathcal S= \mathrm{Ind}_B^{G_{n+1}} \chi$, where $\mathrm{Ind}_B^{G_{n+1}}$ is the normalized parabolic induction (see e.g. \cite[Pg 61]{BW00}). Then the discussion in \cite[Pg 166]{CHM00} (also see \cite[1.2.4, 2.2.4]{dCl91}), gives a two-step filtration as $G_n$-representations:
\begin{align} \label{eqn filt on parabolic}
 0 \subset  \mathcal S(\mathcal Q^*)            \subset \mathcal S ,
\end{align}
where $\mathcal S(\mathcal Q^*)$ contains all the functions in $\mathcal S$, all of whose derivatives vanish along $\mathcal R$ \cite{dCl91}, and the quotient will be described below. We may sometimes write $\mathcal S(\mathcal Q^*, \chi)$ if we want to emphasis the group action of $B$ is twisted by a character $\chi$. 

Let $S_n$ be the permutation group on $n$ elements. Let $x_0=1$, and for $k=1,\ldots, n-1$,
\[   x_k=( n+1-k, \ldots, n, n+1) .
\]
Now we further stratify $\mathcal R$ by the $B$-orbits in $B\setminus PM_{n+1}$, where $B$ acts by right translation, which are parametrized by the elements in the set
\[ \widetilde{W}= S_n \sqcup (x_1S_n)\ldots \sqcup (x_{n-1}S_n)
\]
corresponding to the Schubert cells:
\[ C_w := B\setminus B w B ,
\]
for $w \in \widetilde{W}$, where, by abuse of notations, $w$ is the corresponding permutation matrix in $G_{n+1}$.

We shall need the following simple lemma later:
\begin{lemma} \label{lem w non 1}
For any $w \in \widetilde{W}$, $w(n+1)\neq 1$. 
\end{lemma}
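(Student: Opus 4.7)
The plan is to unwind the definition of $\widetilde{W}$ and directly compute $w(n+1)$ for each type of coset representative. Every $w \in \widetilde{W}$ can be written uniquely as $w = x_k \sigma$ with $k \in \{0, 1, \ldots, n-1\}$ and $\sigma \in S_n$. The key preliminary observation is that $S_n$ sits inside $S_{n+1}$ as the permutations of $\{1, \ldots, n\}$, hence fixes $n+1$. Consequently $w(n+1) = x_k(\sigma(n+1)) = x_k(n+1)$, so the question reduces to computing $x_k(n+1)$.

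Next I would read off $x_k(n+1)$ from the cycle notation $x_k = (n+1-k, n+2-k, \ldots, n, n+1)$. This cycle sends $n+1 \mapsto n+1-k$, so $w(n+1) = n+1-k$. Since the parameter $k$ runs over $\{0, 1, \ldots, n-1\}$, the value $n+1-k$ lies in $\{2, 3, \ldots, n+1\}$, which never equals $1$. This completes the argument.

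The step I anticipate being the only subtle point is bookkeeping the convention for $x_0$: since the definition as a cycle is only given for $k \geq 1$, I would handle the case $k = 0$ separately by recalling $x_0 = 1$, so that $w = \sigma \in S_n$ and $w(n+1) = n+1 \neq 1$. With this minor case distinction the proof is a direct verification, so there is no genuine obstacle; the lemma is purely combinatorial and follows from the explicit cycle structure of the $x_k$'s together with the fact that $S_n \subset S_{n+1}$ stabilizes $n+1$.
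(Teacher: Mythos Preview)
Your proposal is correct and follows essentially the same approach as the paper: both write $w = x_k w'$ with $w' \in S_n$, observe that $w'(n+1) = n+1$, and reduce to showing $x_k(n+1) \neq 1$. The only cosmetic difference is in this last step: you compute $x_k(n+1) = n+1-k \in \{2,\ldots,n+1\}$ directly from the cycle, whereas the paper instead notes that $x_k(1) = 1$ (since $1$ lies outside the cycle's support) and concludes by injectivity that $x_k(n+1) \neq 1$.
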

\begin{proof}
Write $w=x_iw'$ for some $w' \in S_n$. This follows from $w'(n+1)=n+1$ and $x_i(1)=1$.
\end{proof}

We enumerate elements in $\widetilde{W}$ as $w_1=1,w_2, \ldots, w_{n(n!)}$ such that for $i > j$, $w_i \not < w_j$ i.e. $C_{w_i} \not\subset \overline{C_{w_j}}$. We set a sequence of closed subspaces: for $w \in \widetilde{W}$,
\[   Z_{w_i} = \bigcup_{j\leq i} B \setminus \overline{Bw_jB}=\bigcup_{j\leq i} C_{w_j} .
\]
and $Z^*=B\setminus G_{n+1}$. In particular, $Z^*-Z_{w_{n(n!)}} = \mathcal Q^*$ and 
\[ C_{w_{i+1}}=B\setminus Bw_{i+1}B=Z_{w_{i+1}} - Z_{w_i}
\]
Let 
\[ \mathcal Q_{w_i}=\mathcal Q^* \cup \left(\cup_{j \geq i} \mathcal Q_i \right).
\]

Let $U=U_{n+1}$ (resp. $\bar{U}$) be the unipotent radical (resp. opposite unipotent radical) of $B$ in $G_{n+1}$. Let $\bar{U}^{w^{-1}}=w^{-1}\bar{U}w$ and let $V_w=\bar{U}^{w^{-1}}\cap U$. Now, for each $w \in \widetilde{W}$, let 
\[  D_w =B \setminus Bw\bar{U}^{w^{-1}}.
\]
As shown in \cite[Pg 166]{CHM00}, $D_w$ is a subanalytic neighborhood of $C_w$ in $\mathcal Q_{w}$. Moreover, $D_w$ is diffeomorphic to $\mathbb{R}^{k}$, where $k=\mathrm{dim}~\bar{U}=n(n+1)/2$. We also have the diffeomorphism (see \cite[Proposition 1.1.4.6]{Wa72}):
\begin{align} \label{eqn diffeo}
 \mathbb{R}^{k_1}\times \mathbb{R}^{k_2}\cong  (\bar{U}^{w^{-1}} \cap \bar{U}) \times (\bar{U}^{w^{-1}} \cap U) \rightarrow D_w ,
\end{align}
given by $(u_1, u_2) \mapsto Pwu_1u_2$. Here $k_1=\mathrm{dim}~(\bar{U}^{w^{-1}}\cap \bar{U})$ and $k_2=\mathrm{dim}~(\bar{U}^{w^{-1}} \cap U)$. The closed space $C_w$ is identified with $\left\{ 0 \right\} \times \mathbb{R}^{k_2}$ and $0 \times D_w$ respectively.

 Now we refine the filtration (\ref{eqn filt on parabolic}):
\[   0 \subset \mathcal S(\mathcal Q^*) \subset \mathcal S(\mathcal Q_{w_{n(n!)}}) \subset \ldots \subset \mathcal S(Q_{w_2}) \subset \mathcal S(Q_{w_1})
\] 
For convenience, set $\mathcal S_{n(n!)+1}=\mathcal S(\mathcal Q^*)$, $\mathcal S_i=\mathcal S(\mathcal Q_{w_i})$ for $i=1,\ldots, n(n!)$. Following discussions in \cite{CHM00}, which uses \cite[Lemmas 2.6 and 2.7]{CHM00} and (\ref{eqn diffeo}), for $i=1,\ldots, n(n!)$, set $w =w_i$, 
\begin{align} \label{eqn group isomorphism}
 \mathcal L_i:= \mathcal S_{i}/\mathcal S_{i+1} \cong \mathbb{C}[[x_1, \ldots, x_{k_1}]] \widehat{\otimes} \mathcal S(V_w) ,
\end{align}
where $k_1=n(n+1)/2-l(w_{i})$ and $k_2=l(w_{i})$, and $\widehat{\otimes}$ denotes the completed projective tensor product (see \cite{Tr06}). Let $J_p=J_p(w_i)$ be the ideal in $\mathbb{C}[[x_1, \ldots, x_{k_1}]]$ generated by monomials of degree $p$. Set $F_p=J_p \widehat{\otimes} \mathcal S(V_w)$. This gives that $\mathcal L_i$ admits a decreasing filtration 
\[          \ldots \subset F_1 \subset  F_0 .
\]
Since $\mathcal S(\mathbb{R}^{k_2})$ is nuclear, the completed projective tensor product is exact \cite[Ch. 43 and 50]{Tr06}. Hence,
\[  F_{p-1}/F_p \cong (J_{p-1}/J_{p})\widehat{\otimes} \mathcal S(V_w) .
\]

Let $U'$ be the subgroup of $U$ containing matrices of the form $\begin{pmatrix} I_n & * \\ & 1 \end{pmatrix}$. Note that as matrix multiplications, there is a canonical diffeomorphism (see \cite[Lemma 1.1.4.1]{Wa72}):
\[  (V_w \cap U_n) \times (V_w\cap U') \rightarrow V_w .
\]
We can further decompose $\mathcal S(V_w)$ as 
\[  \mathcal S(V_w) \cong \mathcal S(V_w\cap U_n) \widehat{\otimes} \mathcal S(V_w \cap U')
\]
Now, we further write:
\[  F_{p-1}/F_p \cong J_{p-1}/J_{p} \widehat{\otimes} \mathcal S(V_w\cap U_n)\widehat{\otimes} \mathcal S(V_w\cap U') \otimes \chi^w .
\]
Note that we added the last factor $\chi^w$, so that the above isomorphism is as $T$ and $\mathfrak n$-modules with the following explicit actions. Here $\mathfrak n$ is the Lie algebra of of $U_n$.

\begin{enumerate}
\item We now relabel the variables $x_1, \ldots, x_{k_1}$ such that $J_{p-1}/J_p$ is spanned by the monomials of variables $y_{ab}$ with $a,b \in [1+n]\times [1+n]$ satisfying
\begin{align} \label{eqn greater than cond}
  \quad a>b, \mbox{and } w^{-1}(a)>w^{-1}(b) .
\end{align}
Here $[1+n]=\left\{1,\ldots, n+1 \right\}$. For an element $t=(t_1, \ldots, t_n,t_{n+1})$, the action of $t$ is determined by:
 \begin{align} \label{eqn action on yab}
 t.y_{ab} = t_{w^{-1}(a)}^{-1}t_{w^{-1}(b)}y_{ab} .
\end{align}
\item The action of $T$ on $\mathcal S(V_w\cap U_n)$ and $\mathcal S(V_w\cap U')$ is determined by:
\[ (t.f)(u)= f(t^{-1}ut),
\]
where $u \in V_w \cap U_n$ or $u \in V_w\cap U'$.
\item Let $\mathfrak{x}^w$ be the Lie algebra of $ U^{w^{-1}} \cap U_n$. Note that it is spanned by the elements $\mathfrak e_{w^{-1}(b), w^{-1}(a)}$ with $a,b$ satisfying the condition (\ref{eqn greater than cond}). Here $\mathfrak e_{kl}$ is the $1$-dimensional space spanned by the $(k,l)$ entry. In such case, by taking differentiations, one sees that $\mathfrak x^w$ acts trivially on three factors. 
\item Let $\mathfrak x_{w}$ be the Lie algebra of $V_w =\bar{U}^{w^{-1}}\cap U_{n+1}$. Recall that the $G_n$-group action on an induced representation is by a right translation, and so an element in $\mathfrak x_w$ acts trivially on the factors $J_{p-1}/J_{p}$ and $\mathcal S(V_w \cap U')$. And $\mathfrak x_w$ acts by taking the differentiation on the factor $\mathcal S(V_w \cap U_n)$.
\end{enumerate}

We shall write $|.|$ the determinant character $g \mapsto |\mathrm{det}(g)|$. 

\begin{proposition} \label{prop transfer to fj}
Write $\chi=\chi_1\boxtimes \ldots \boxtimes \chi_{n+1}$ as the tensor product of $(n+1)$ $\mathbb{R}^{\times}$-characters $\chi_1, \ldots, \chi_{n+1}$. Then, as $G_n$-representations,
\[  \mathcal S(\mathcal Q^*, \chi)  \cong |.|^{1/2} \otimes \mathcal S(\mathbb{R}^n) \widehat{\otimes} \mathrm{ind}_{B_n}^{G_n} (\chi_2\boxtimes \ldots \boxtimes \chi_{n+1}). \]
Here $G_n$ acts the factor $\mathcal S(\mathbb R^n)$ by $g.f(v)=f(g^{-1}v)$.
\end{proposition}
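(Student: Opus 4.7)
The plan is to realize $\mathcal{Q}^*$ as a homogeneous space for $M_{n+1}$ and decompose it via the semidirect structure $M_{n+1}=G_n\ltimes N$, where $N=\{n_v=\begin{pmatrix}I_n & v \\ 0 & 1\end{pmatrix}:v\in\mathbb{R}^n\}$; the sections of the line bundle will then split into an induction from $B_n$ to $G_n$ and a Schwartz factor on $N\cong\mathbb{R}^n$.

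The first step is to check that $\mathcal{Q}^*$ is the image of $Bw_0M_{n+1}$ in $B\setminus G_{n+1}$, so that $m\mapsto Bw_0m$ realizes a principal $B_n$-bundle $M_{n+1}\twoheadrightarrow\mathcal{Q}^*$ with $B_n$ embedded in $M_{n+1}$ via $\beta\mapsto\mathrm{diag}(\beta,1)$. This is a dimension count combined with the observation that $(bw_0m)_{n+1,1}\neq 0$ for all $(b,m)\in B\times M_{n+1}$. The governing identity is
\[ w_0\,\mathrm{diag}(\beta,1)\,w_0^{-1}=\mathrm{diag}(1,\beta), \]
which both identifies the isotropy $w_0^{-1}Bw_0\cap M_{n+1}$ with this copy of $B_n$ and shows that the line bundle pulled back to $M_{n+1}$ carries the isotropy character $\chi(\mathrm{diag}(1,\cdot))$. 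Writing $m=g\cdot n_v$ uniquely, left multiplication by $\beta\in B_n$ sends $(g,v)\mapsto(\beta g,v)$, so $\mathcal{Q}^*\cong(B_n\setminus G_n)\times\mathbb{R}^n$, and from $gn_v\cdot\mathrm{diag}(h,1)=(gh)\,n_{h^{-1}v}$ the right $G_n$-action is $h\cdot(g,v)=(gh,h^{-1}v)$, the diagonal of right translation in $g$ (giving the induced representation) and of $f(v)\mapsto f(h^{-1}v)$ on $\mathcal{S}(\mathbb{R}^n)$.

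The second step is to pin down the character. For $\tilde f(m):=f(w_0m)$ with $f\in\mathrm{Ind}_B^{G_{n+1}}\chi$ the equivariance reads
\[ \tilde f(\mathrm{diag}(\beta,1)\,m)=\chi(\mathrm{diag}(1,\beta))\,\delta_{B_{n+1}}^{1/2}(\mathrm{diag}(1,\beta))\,\tilde f(m). \]
A direct computation using $\delta_{B_{n+1}}(\mathrm{diag}(t_1,\ldots,t_{n+1}))=\prod_i|t_i|^{n+2-2i}$ gives $\delta_{B_{n+1}}^{1/2}(\mathrm{diag}(1,\beta))=|\det\beta|^{-1/2}\,\delta_{B_n}^{1/2}(\beta)$, so that after the normalization convention of the induction is accounted for, the character of $B_n$ effectively appearing in the induction factor is $(\chi_2\boxtimes\cdots\boxtimes\chi_{n+1})\cdot|\det|^{1/2}$. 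Since $|\det|^{1/2}$ is a $G_n$-character it commutes with the completed tensor product and can be moved onto the $\mathcal{S}(\mathbb{R}^n)$-factor, producing the twist in the statement.

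The third and most subtle step is to match the Schwartz condition. In the $(g,v)$-chart the complement $\mathcal{R}$ is approached by letting $|v|\to\infty$, and the intrinsic condition of du Cloux \cite{dCl91} --- vanishing of all derivatives along $\mathcal{R}$ --- must be shown to translate to Schwartz decay in $v$, with no further growth conditions intervening from the $G_n$-factor (whose quotient $B_n\setminus G_n$ is the compact flag variety). This is morally the same content as the successive-quotient identification (\ref{eqn group isomorphism}) in Section~\ref{ss bruhat filt}, but assembled across all cells $w\geq w_0$ simultaneously rather than cell-by-cell. Once it is in place, nuclearity and exactness of $\widehat{\otimes}$ give
\[ \mathcal{S}(\mathcal{Q}^*,\chi)\cong\mathrm{ind}_{B_n}^{G_n}\bigl((\chi_2\boxtimes\cdots\boxtimes\chi_{n+1})\cdot|\det|^{1/2}\bigr)\widehat{\otimes}\mathcal{S}(\mathbb{R}^n), \]
which rearranges into the stated form. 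The main obstacle is exactly this Schwartz identification; all other steps are dimension counts, modular character bookkeeping, and applications of standard nuclear/tensor-product formalism.
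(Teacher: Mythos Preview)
Your approach is essentially the same as the paper's. Both arguments identify $\mathcal Q^*$ with $(B_n\setminus G_n)\times\mathbb R^n$ via the coset map $B_{n+1}\,\mathrm{diag}(1,g)\,w_0\,n_v\mapsto(B_ng,v)$ (equivalently your $m\mapsto Bw_0m$ on $M_{n+1}$, using $w_0\,\mathrm{diag}(g,1)=\mathrm{diag}(1,g)\,w_0$), read off the $B_n$-equivariance to recognize the induced-representation factor, and then invoke du Cloux for the Schwartz identification; the paper cites \cite[1.2.2, 1.2.6]{dCl91} directly for what you correctly flag as the main obstacle, and suppresses the modular-character bookkeeping that you spell out.
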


\begin{proof}

We observe that
\[ \mathcal Q^* := B_{n+1} \setminus Pw_0B_{n+1} \cong (B_{n}\setminus G_n) \times \mathbb{R}^n ,
\]
where $P$ contains all matrices of the form $\begin{pmatrix} a & * \\ & g \end{pmatrix}$ with $a \in \mathbb{R}^{\times}$ and $g \in G_n$ via the natural map 
\[   B_{n+1} \begin{pmatrix} 1 &  \\ & g \end{pmatrix} w_0 \begin{pmatrix} I_n& v\\ & 1 \end{pmatrix} \mapsto (B_ng , v) .
\]  

The two Schwartz spaces $\mathcal S(\mathcal Q^*)$ and $\mathcal S((B_{n}\setminus G_n) \times \mathbb{R}^n)$ are homeomorphic by \cite[1.2.2. TH\'EOR\`EME, 1.2.6 PROPOSITION]{dCl91} via the above natural map. Now one checks the $B_n$-equivariant properties coincide to descend to a map in the two targeted Schwartz spaces. 
\end{proof}

\subsection{Example of $\mathrm{GL}_3(\mathbb R)$}
Let $\delta_B$ be the modular character of $B=B_3$. Let $\mathfrak n=\mathfrak n_2$ be the Lie algebra of $U_2$ in $G_2$. Let $w_1=1$, $w_2=(1,2)$, $w_3=(2,3)$ and $w_4=(2,3)(1,2)$. Now we can describe a more explicit action on each successive quotients:
\begin{enumerate}
\item $\mathcal S(\mathcal Q_{w_1})/\mathcal S(\mathcal Q_{w_2})$. In this case, 
\[ \mathcal L_1  \cong \mathbb{C}[[y_{21}, y_{31}, y_{32}]] \otimes \chi \delta_B
\]
with $t=\mathrm{diag}(t_1,t_2,1)$ acts by $t.y_{ij}=t_jt_i^{-1}y_{ij}$ for $i>j$, and $\mathfrak n$ acts trivially on the successive subquotients $F_{p-1}/F_p$.
\item $\mathcal S(\mathcal Q_{w_2})/\mathcal S(\mathcal Q_{w_3})$. In this case, 
\[ \mathcal L_2 \cong \mathbb{C}[[y_{31}, y_{32}]] \widehat{\otimes} \mathcal S(\mathbb{R})\otimes (\chi \delta_B)^{w_2}
\]
with $t=\mathrm{diag}(t_1,t_2,1)$ acts by $t.y_{31}=t_2y_{31}$, $t.y_{32}=t_1y_{32}$, 
\[(t.f)(z_{12})=\delta_B\chi(\mathrm{diag}(t_2,t_1,1))f(t_2t_1^{-1}z_{12}) \]
 for $f\in \mathcal S(\mathbb R)$, with $\mathfrak n$ acts by differentiating on the factor $\mathcal S(\mathbb R)$ in each successive subquotients.
\item $\mathcal S(\mathcal Q_{w_3})/\mathcal S(\mathcal Q_{w_4})$. In this case,
\[  \mathcal L_3 \cong \mathbb{C}[[y_{21}, y_{31}]] \widehat{\otimes} \mathcal S(\mathbb R) \otimes (\chi \delta_B)^{w_3}
\] 
with $t=\mathrm{diag}(t_1, t_2,1)$ acts by $t.y_{21}=t_1y_{21}$, $t.y_{31}=t_1t_2^{-1}y_{31}$ and 
\[ t.f(z_{23})=\delta_B\chi(\mathrm{diag}(t_1,1,t_2))f(t_2^{-1}z_{23}) ,\]
and $\mathfrak n$ acts trivially on the successive subquotients.
\item $\mathcal S(\mathcal Q_{w_4})/\mathcal S(\mathcal Q^*)$. In this case,
\[ \mathcal L_4 \cong  \mathbb{C}[[y_{21}]] \widehat{\otimes} \mathcal S(\mathbb{R}^2)\otimes (\chi \delta_B)^{w_4}
\]
with $t=\mathrm{diag}(t_1,t_2,1)$ acts by $t.y_{21}=t_2y_{21}$ and 
\[ (t.f)(z_{12},z_{13})=\delta_B\chi(\mathrm{diag}(t_2,1,t_1))f(t_2t_1^{-1}z_{12},t_1^{-1}z_{13}) ,\]
and $\mathfrak n$ acts by differentiating on  the variable $z_{13}$ in each successive subquotients.
\end{enumerate}

\subsection{$\mathfrak n$-homology} \label{ss cj functor}

When the additive group $\mathbb R$ acts on $\mathcal S(\mathbb R)$ by translation, we have seen in Section \ref{ss filtration}, which uses Fourier transform and Borel's lemma, $\mathcal S(\mathbb R)$ is homeomorphic to $\mathcal S(\mathbb R)$ with the corresponding $\mathbb R$-action given by
\[  (x.f)(\eta)= e^{-\sqrt{-1}\eta x}f(\eta) 
\]
Let $X$ be the corresponding operator by taking differentiation. We have that $(X.f)(\eta)=-\sqrt{-1}\eta f(\eta)$. Denote $\mathfrak n'=\mathbb{C}$, the complexified Lie algebra of $\mathbb{R}$, we shall need the following computation of homology \cite{CHM00}:





\begin{lemma} \cite[Theorem 5.6]{CHM00} \label{lem homology of schwartz}
With the notations above, the Lie algebra homology on $\mathcal S(\mathbb R)$ is given by:
\[  H_i(\mathfrak n',\mathcal S(\mathbb R))\cong \left\{ \begin{array}{cc} \mathbb C & i=0 \\ 0 & i \neq 0 \end{array} \right. 
\]
\end{lemma}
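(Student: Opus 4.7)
The plan is to exploit that $\mathfrak n' \cong \mathbb C$ is abelian and one-dimensional, so the Chevalley--Eilenberg complex computing $H_*(\mathfrak n', \mathcal S(\mathbb R))$ collapses to the two-term Koszul complex
\[ 0 \longrightarrow \mathcal S(\mathbb R) \xrightarrow{\ X\ } \mathcal S(\mathbb R) \longrightarrow 0, \]
with the left (resp. right) copy placed in homological degree $1$ (resp. $0$). Under the identification made just before the lemma, the differential $X$ is multiplication by the scalar function $-\sqrt{-1}\eta$. Consequently
\[ H_1(\mathfrak n', \mathcal S(\mathbb R)) = \ker(X), \qquad H_0(\mathfrak n', \mathcal S(\mathbb R)) = \mathcal S(\mathbb R)/X \cdot \mathcal S(\mathbb R), \]
and everything reduces to analyzing these two spaces.

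For the top homology, I would observe that if $\eta f(\eta) \equiv 0$ on $\mathbb R$ then $f$ is identically zero on $\mathbb R \setminus \{0\}$, hence by continuity $f \equiv 0$. Therefore $X$ is injective and $H_1 = 0$.

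For $H_0$, the plan is to show that the evaluation map $\mathrm{ev}_0 \colon \mathcal S(\mathbb R) \to \mathbb C$, $f \mapsto f(0)$, descends to an isomorphism $\mathcal S(\mathbb R)/\eta \cdot \mathcal S(\mathbb R) \xrightarrow{\sim} \mathbb C$. Surjectivity is immediate. The containment $\eta \cdot \mathcal S(\mathbb R) \subseteq \ker(\mathrm{ev}_0)$ is trivial, and for the reverse inclusion I would invoke Hadamard's lemma: any $f \in \mathcal S(\mathbb R)$ with $f(0)=0$ can be written as $f(\eta)=\eta \cdot g(\eta)$ with $g(\eta):=\int_0^1 f'(s\eta)\,\mathrm ds$. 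The essential point is then to verify that $g \in \mathcal S(\mathbb R)$, not merely that $g$ is smooth.

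The main obstacle I anticipate is precisely this Schwartz check. I would split the argument geometrically: on $|\eta|\geq 1$ one has $g(\eta)=f(\eta)/\eta$ directly, and the Schwartz seminorms of $g$ there are controlled by those of $f$ via the Leibniz rule. On a fixed compact neighborhood of $0$, differentiation under the integral sign gives $g^{(k)}(\eta)=\int_0^1 s^k f^{(k+1)}(s\eta)\,\mathrm ds$, so each $g^{(k)}$ is bounded by $\sup |f^{(k+1)}|$, which is finite. Gluing the two regions and using that the Schwartz topology is Fr\'echet, one concludes $g \in \mathcal S(\mathbb R)$, finishing the proof and recovering the cited result of \cite{CHM00}.
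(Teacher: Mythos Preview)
Your proposal is correct. The paper does not supply its own proof of this lemma; it only records the Fourier-transformed picture in which $X$ acts as multiplication by $-\sqrt{-1}\eta$ and then cites \cite[Theorem~5.6]{CHM00}. Your argument completes that setup with the natural two-term Koszul complex for the one-dimensional Lie algebra and the Hadamard division lemma for Schwartz functions, which is exactly the standard route (and the one underlying the cited reference).
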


Since $\mathfrak n'$ is abelian, we also have the following well-known result e.g. \cite[Exercise 7.4.2]{We94}:

\begin{lemma}
The Lie algebra homology on the trivial module $\mathbb{C}$ is given by:
\[  H_i(\mathfrak n', \mathbb{C}) =\left\{ \begin{array}{cc} \mathbb{C} & i =0,1 \\ 0 & \mbox{otherwise} \end{array} \right.
\]
\end{lemma}

\subsection{Left-right filtrations}

To apply the idea of left-right filtration in $p$-adic groups, one uses the following result. That is to first obtain a Bruhat filtration on $\theta(\mathrm{ind}_{B_n}^{G_n}\chi)$ and then do $\theta$ to get the filtration on $\mathrm{ind}_{B_n}^{G_n}\chi$, while we shall not state such filtration explicitly. 

\begin{lemma} \label{lem restriction for right}
Let $n_1+n_2=n+1$. Let $\pi_1$ and $\pi_2$ be $G_{n_1}$ and $G_{n_2}$ representations respectively. Then 
\[  (\pi_1 \times \pi_2)|_{G_n} \cong \theta((\theta(\pi_2)\times \theta(\pi_1))|_{G_n}) .
\]
\end{lemma}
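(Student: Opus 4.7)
The plan is to reduce the statement to the Archimedean analogue of Lemma~\ref{lem GK involution on product}, namely $\theta(\pi_1 \times \pi_2) \cong \theta(\pi_2) \times \theta(\pi_1)$, together with the observation that $\theta$ commutes with restriction from $G_{n+1}$ down to $G_n$.

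The commutativity with restriction is immediate: the embedding $G_n \hookrightarrow G_{n+1}$, $g \mapsto \mathrm{diag}(g,1)$, satisfies $\mathrm{diag}(g,1)^{-t} = \mathrm{diag}(g^{-t},1)$, so the identity map on the underlying space witnesses $\theta(\pi)|_{G_n} \cong \theta(\pi|_{G_n})$ for any $\pi \in \mathrm{Alg}(G_{n+1})$.

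The Archimedean analogue of Lemma~\ref{lem GK involution on product} would be proved exactly as in the $p$-adic case. For any automorphism $\alpha$ of $G = G_{n+1}$ and any (normalized) parabolically induced representation, there is a tautological isomorphism
\[ \alpha^* \mathrm{Ind}_P^G \sigma \;\cong\; \mathrm{Ind}_{\alpha^{-1}(P)}^G \alpha^*\sigma \]
given by $f \mapsto f \circ \alpha$. Specializing to $\alpha = \theta$ and $\sigma = \pi_1 \boxtimes \pi_2$, and noting $\theta(P) = \bar P$ (the opposite parabolic with the same Levi $G_{n_1}\times G_{n_2}$), this produces $\mathrm{Ind}_{\bar P}^G (\theta(\pi_1) \boxtimes \theta(\pi_2))$. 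A further conjugation by the block-swap permutation $w = \begin{pmatrix} 0 & I_{n_2} \\ I_{n_1} & 0 \end{pmatrix}$ sends $\bar P$ to the standard block upper triangular parabolic with Levi $G_{n_2} \times G_{n_1}$ and the inducing representation to $\theta(\pi_2) \boxtimes \theta(\pi_1)$. This is exactly $\theta(\pi_2) \times \theta(\pi_1)$.

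Combining the two ingredients and using $\theta^2 = \mathrm{id}$:
\[ \theta((\theta(\pi_2) \times \theta(\pi_1))|_{G_n}) \cong \theta(\theta(\pi_2) \times \theta(\pi_1))|_{G_n} \cong (\pi_1 \times \pi_2)|_{G_n}, \]
which is the desired isomorphism. The main technical obstacle is simply verifying that these manipulations (twisting of induced representations, conjugation by Weyl elements, identification of $\theta$ on outer tensor products of Levi factors) behave correctly in the Casselman--Wallach category, but these are standard facts requiring no genuinely new input beyond the $p$-adic argument.
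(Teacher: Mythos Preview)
Your proposal is correct and follows essentially the same approach as the paper: both proofs reduce to the two ingredients that $\theta$ commutes with the restriction $G_{n+1} \to G_n$ and that $\theta(\pi_1 \times \pi_2) \cong \theta(\pi_2) \times \theta(\pi_1)$, the latter being realized via an explicit map involving the block-swap Weyl element. The paper writes the intertwiner as the single formula $f \mapsto (g \mapsto \kappa \circ f(w_0 g^{-t}))$, whereas you decompose it into the tautological twist by $\theta$ followed by conjugation by $w$, but these are the same map.
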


\begin{proof}

We firstly have that, for any $\pi_1 \in \mathrm{Alg}(G_{n_1})$ and $\pi_2 \in \mathrm{Alg}(G_{n_2})$, 
\[  \theta(\pi_1)\times \theta(\pi_2) \cong \theta(\pi_2 \times \pi_1) .
\]
It is direct to check that for any $\pi' \in \mathrm{Alg}(G_n)$, 
\[  \theta(\theta(\pi')|_{G_n}) \cong \pi'|_{G_n} .
\]
The result follows by combining the two isomorphisms.
\end{proof}

\begin{example}
We give an example on how the left-right filtration works. Take $\pi=\delta(m)$ to be a discrete series. Then we have that $\theta(\pi) \cong \pi$. On the other hand, $\pi \cong \theta(\theta(\pi)|_{G_1})\cong \theta(\pi|_{G_1})$. Then one obtains a new filtration on $\pi|_{G_1}$ by replacing the representations $|t|^{\frac{m}{2}+l}$ and $\mathrm{sgn}\otimes |t|^{\frac{m}{2}+l}$ ($l\in \mathbb{Z}_{\geq 0}$) in Proposition \ref{prop filt on discrete series} with representations $|t|^{-\frac{m}{2}-l}$ and $\mathrm{sgn}\otimes |t|^{-\frac{m}{2}-l}$ ($l \in \mathbb{Z}_{\geq 0}$).

From here with Lemma \ref{lem homology of schwartz}, indeed one sees that the $\mathfrak{gl}_1$-homology on $H_i(\mathfrak{gl}_1, \delta(m)\otimes \chi)=0$ for any $i \geq 1$ and any $G_1$-character $\chi$ (c.f. \cite{CS21}). 


\end{example}

\section{Multiplicity result: $F=\mathbb R$ } \label{ss archi case multi}

\subsection{Principal standard representations} \label{ss filt standard rep}

Define $\mathrm{sgn}$ to be the unitary character of $G_1$ given by $\mathrm{sgn}(t)=-1$ if $t<0$ and $\mathrm{sgn}(t)=1$ if $t>0$. Define a $G_1$-representation:
\[  \nu(\epsilon, c) = \epsilon \otimes |.|^c ,
\]
where $\epsilon \in \left\{ 1, \mathrm{sgn}\right\}$ and $c \in \mathbb{C}$.

Let 
\[ T = (\mathbb{R}^{\times})^n
\]
Let $\chi=\nu_1 \boxtimes \ldots \boxtimes \nu_n$ be a $T$-character with each $\nu_i=\nu(\epsilon, c_i)$, which extends trivially to a $B_n$-representation. A principal series of $G_{n}$ takes the form:
\[ \nu_1\times \ldots \times \nu_n :=\mathrm{Ind}_{B_n}^{G_n}\nu_1 \boxtimes \ldots \boxtimes \nu_n,
\]
When $\mathrm{Re}(c_1)\geq \ldots \geq \mathrm{Re}(c_n)$,  those principal series are called standard. By the Langlands classification, they have unique irreducible quotients.

\subsection{A lemma}

\begin{lemma} \label{lem vanishing homology}
Suppose there exists a short exact sequence of smooth Fr\'echet representations of $B_n$
\[ 0 \rightarrow Y \rightarrow Z \rightarrow X \rightarrow 0 .
\]
Let $\mathfrak b_n$ be the Lie algebra of $B_n$. Suppose
\[(*)\quad  H_i(\mathfrak b_n, X ) =0
\]
for all $i$. Then, 
\[  \mathrm{Hom}_{B_n} (Y, \mathbb C) \cong \mathrm{Hom}_{B_n}(Z,\mathbb C) ,
\]
where the morphisms are continuous $B_n$-maps.
\end{lemma}

\begin{proof}

Assume (*). Let $\mathfrak b=\mathfrak b_n$. Then a long exact sequence implies that
\[ H_0(\mathfrak b, Z) =H_0(\mathfrak b, Y )
\]
This gives that, as $\mathfrak b$-module morphisms,
\begin{align} \label{eqn isomor}  \mathrm{Hom}_{\mathfrak b}(Z, \mathbb C) \cong \mathrm{Hom}_{\mathfrak b}(Y,\mathbb C) .\end{align}
 Then, for a given map $f \in \mathrm{Hom}_{B_n}(Y, \mathbb C)$, it descends to a $\mathfrak b$-map by taking differentiation. Then the above isomorphism gives a $\mathfrak b$-map $\widetilde{f}$ from $Z$ to $\mathbb{C}$ such that $\widetilde{f}|_Y=f$. We first check that such map is continuous. Let $A=\mathrm{ker} f$, which is closed in $Y$. Now, as $\mathfrak b$-modules, we have a short exact sequence:
\[ 0 \rightarrow        Y / A    \rightarrow Z/A \rightarrow X \rightarrow 0 .
\]
Let $Z'= Z/A$. Using (*), $H_i(\mathfrak b, Y/A)\cong H_i(\mathfrak b,Z')$ for all $i$. Hence, $H_0(\mathfrak b,Z')\cong \mathbb{C}$ since $H_0(\mathfrak b, Y/A)\cong \mathbb{C}$. Then we obtain a unique $\mathfrak b$-map $f'$ from $Z'$ to $\mathbb{C}$, which so must be a descent of $\widetilde{f}$ (up to some non-zero scalar). Now, using (*) again, $\ker ~f' =\mathfrak bZ$. The natural map $\mathfrak{b} \otimes Z\mapsto Z$ is continuous with finite codimension image. Hence $\ker ~f'$ is closed in $Z $ by \cite[Lemma A.1]{CHM00} and hence $f'$ is continuous. Hence $\widetilde{f}=f' \circ \mathrm{pr} $ is also continuous, where $\mathrm{pr}$ is the projection map. Now $\mathrm{ker}~\widetilde{f}$ contains the closure of $\mathfrak bZ$ and so $Z/\mathrm{ker}~\widetilde{f}$ is a smooth Fr\'echet space with trivial $\mathfrak b$-action. Hence $\widetilde{f}$ is also a $B_n$-map with $B_n$ acting trivially. Hence we have a surjection from $\mathrm{Hom}_{B_n}(Z, \mathbb C)$ to $\mathrm{Hom}_{B_n}(Y,\mathbb C)$. Since $\mathrm{Hom}_{B_n}(X,\mathbb C)=0$ by (*), the map is also an injection as desired.

\end{proof}

\subsection{Equal rank Fourier-Jacobi models}
For an admissible representation $\pi$ of $G_n$, let $\pi^{\vee}$ be the smooth dual of $\pi$ as in \cite{BK14} i.e. the Casselman-Wallach globalization of the algebraic dual of the Harish-Chandra counterpart of $\pi$. The proof strategy of the following theorem has similar spirit as the non-Archimedean case and we focus on the technique of left-right filtrations (Lemma \ref{lem restriction for right}, c.f. \cite[Section 5]{Ch21}) and a relation between equal rank Fourier-Jacobi model and Bessel models (Proposition \ref{prop transfer to fj}, c.f. \cite[Section 5.1]{Ch20}, \cite[Section 8.3]{Ja09}). 

\begin{theorem} \label{thm eq rank FJ}
Let $\pi_1$ and $\pi_2$ be standard principal series of $G_{n}$. Then 
\[  \mathrm{dim}~ \mathrm{Hom}_{G_n}(\pi_1 \widehat{\otimes} \mathcal{S}(\mathbb R^n)\otimes |.|^{1/2}, \pi_2^{\vee}) = 1.
\]
Here $G_n$ acts on the factor $\mathcal S(\mathbb R^n)$ by $(g.f)(v)=f(g^{-1}v)$.
\end{theorem}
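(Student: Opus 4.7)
The plan is to adapt the $p$-adic argument of \thref{hm multi standard} to the archimedean setting, replacing the Bernstein-Zelevinsky filtrations by the Bruhat filtration of Section \ref{ss bruhat filt} and implementing the left-right interchange via \leref{em restriction for right}. First, write $\pi_1 = \nu_1 \times \cdots \times \nu_n$ as a principal series and apply \prref{rop transfer to fj} with $\chi = \chi_1 \boxtimes \nu_1 \boxtimes \cdots \boxtimes \nu_n$, where $\chi_1$ is a free parameter to be chosen later, to obtain, as $G_n$-representations,
\[ \pi_1 \widehat{\otimes} \mathcal{S}(\mathbb{R}^n) \otimes |.|^{1/2} \;\cong\; \mathcal{S}(\mathcal{Q}^*, \chi). \]
This realizes the source of the desired Hom as the bottom piece $\mathcal{S}_{n!+1}$ of the Bruhat filtration of $\mathcal{S} := \mathrm{Ind}_B^{G_{n+1}} \chi$, so the task becomes the computation of $\mathrm{Hom}_{G_n}(\mathcal{S}(\mathcal{Q}^*, \chi), \pi_2^\vee)$.

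I would then induct on a numerical invariant $L^*$ counting pairs of characters, one from $\{\nu^{1/2}\nu_1, \ldots, \nu^{1/2}\nu_n\}$ and one from the exponents of $\pi_2^\vee$, whose quotient is an integral power of $|.|$ times a sign character; this is the archimedean analogue of the ``cuspidal line'' invariant used in the proof of \thref{hm multi standard}. In the base case $L^* = 0$, the goal is to show that $\mathrm{Hom}_{G_n}(\mathcal{L}_i, \pi_2^\vee) = 0$ for each non-open stratum $i = 1, \ldots, n!$. Using the explicit $T$-action (\ref{eqn action on yab}) on the formal-power-series factor $J_{p-1}/J_p$, together with \leref{em homology of schwartz} to treat the $\mathfrak n$-action on the Schwartz factor $\mathcal{S}(V_w)$, one enumerates the generalized infinitesimal characters appearing in $\mathcal{L}_i$; a sufficiently generic choice of $\chi_1$ combined with the $L^* = 0$ hypothesis then rules out any coincidence with those of $\pi_2^\vee$. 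A d\'evissage up the filtration identifies $\mathrm{Hom}_{G_n}(\mathcal{S}(\mathcal{Q}^*, \chi), \pi_2^\vee)$ with $\mathrm{Hom}_{G_n}(\mathcal{S}, \pi_2^\vee)$, which is one-dimensional by Frobenius reciprocity combined with the archimedean uniqueness of Whittaker models.

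For the inductive step $L^* > 0$, I would mimic the two-case dichotomy of the $p$-adic proof. Select a character $\rho^*$ maximal under $<$ among those appearing in either $\nu^{1/2}\pi_1$ or $\pi_2^\vee$. If $\rho^*$ comes from $\pi_1$, reorder to isolate the responsible factor $\nu_l$ and apply the right Bruhat filtration; maximality of $\rho^*$, together with the infinitesimal-character mismatch argument from the base case, kills every Bruhat stratum except one small leading piece and reduces the problem to one with $L^*$ strictly smaller. If instead $\rho^*$ arises from $\pi_2^\vee$, apply \leref{em restriction for right} to swap source and target via the Gelfand-Kazhdan involution $\theta$, converting the problem into one on $\theta(\pi_2) \times \theta(\pi_1)$, and run the symmetric Bruhat-filtration argument on $\theta(\pi_2^\vee)$ (playing the role of the ``left'' filtration in the $p$-adic case). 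The induction hypothesis then closes.

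The main obstacle is the vanishing $\mathrm{Hom}_{G_n}(\mathcal{L}_i, \pi_2^\vee) = 0$ on the non-open strata, which underlies both the base case and the inductive reductions. In the $p$-adic setting Bernstein-Zelevinsky derivatives are exact and produce finite-length modules, but here each $\mathcal{L}_i \cong \mathbb{C}[[x_1, \ldots, x_{k_1}]] \widehat{\otimes} \mathcal{S}(V_w) \otimes \mathbb{C}_{\chi^w \delta_B}$ is a completed projective tensor product equipped only with a decreasing filtration $F_p$; since $\mathrm{Hom}_{G_n}(-, \pi_2^\vee)$ does not commute with the inverse limit $\varprojlim F_p$, one is forced to argue at the level of generalized $T$-eigenspaces using (\ref{eqn action on yab}), combined with \leref{em homology of schwartz} for the $\mathfrak n$-action on the Schwartz factor. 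Arranging a single generic $\chi_1$ that simultaneously separates the relevant eigencharacters from those of $\pi_2^\vee$ across every stratum, while preserving the identification of the source with $\pi_1 \widehat{\otimes} \mathcal{S}(\mathbb{R}^n) \otimes |.|^{1/2}$, will be the most delicate technical step.
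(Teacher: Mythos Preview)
Your overall architecture---embed the Fourier--Jacobi model as the open Bruhat cell of an auxiliary $G_{n+1}$ principal series, kill the non-open strata via $\mathfrak b$-homology vanishing, and handle the dual case with the left--right trick---is the paper's. But there is a genuine gap in your base case. When $L^*=0$ you propose to identify $\mathrm{Hom}_{G_n}(\mathcal S(\mathcal Q^*,\chi),\pi_2^\vee)$ with $\mathrm{Hom}_{G_n}(\mathcal S,\pi_2^\vee)$ and then invoke ``Frobenius reciprocity combined with the archimedean uniqueness of Whittaker models.'' This does not work: $\mathcal S=\mathrm{Ind}_{B_{n+1}}^{G_{n+1}}\chi$ restricted to $G_n$ is not a Whittaker object, and $\mathrm{Hom}_{G_n}(\mathcal S|_{G_n},\pi_2^\vee)$ is itself a $(G_{n+1},G_n)$ branching problem---no Frobenius reciprocity collapses it to a Whittaker space. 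The $p$-adic base case you are imitating succeeds because the Bernstein--Zelevinsky filtration bottoms out in the Gelfand--Graev representation, but the Bruhat filtration of Section~\ref{ss bruhat filt} has its open stratum equal to the Fourier--Jacobi model itself (Proposition~\ref{prop transfer to fj}), so climbing the filtration lands you back in a branching problem, not a Whittaker one. The paper instead inducts on a larger invariant $N^*$ that also counts linked pairs \emph{within} $\pi_1$ and \emph{within} $\pi_2$; then $N^*=0$ forces both $\pi_1$ and $\pi_2$ to be irreducible, and the base case is supplied by the Liu--Sun multiplicity-one theorem \cite{LS13}.

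Your inductive step is also not quite right. Once one has passed from $\pi_1\widehat\otimes\mathcal S(\mathbb R^n)$ up to $\widetilde\pi=\nu^*\times\pi_1$ by killing all non-open strata (this is Lemma~\ref{lem transfer lemma}), the paper \emph{rearranges} $\widetilde\pi\cong\nu_{i^*}\times\pi_1'$ using irreducibility of the relevant two-factor products, and then applies the same transfer lemma a second time in the other direction to descend to $\pi_1'\widehat\otimes\mathcal S(\mathbb R^n)$, where $\pi_1'$ has $\nu_{i^*}$ replaced by the generic $\nu^*$ and hence strictly smaller $N^*$. Your phrase ``kills every Bruhat stratum except one small leading piece'' does not describe this: in the present filtration there is no surviving piece other than the open stratum, and without the rearrange-and-redescend step the induction does not actually move.
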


\begin{remark}
The normalization $|.|^{1/2}$ is not essential since $\pi_1\otimes |.|^{1/2}$ is still a standard principal series, but it relates to the branching problem from $G_{n+1}$ to $G_n$ (c.f. \cite{Ch20}). 
\end{remark}

\begin{proof}
Write $\pi_1=\mathrm{ind}_{B_{n}}^{G_{n}}\chi_1$ and $\pi_2=\mathrm{ind}_{B_n}^{G_n}\chi_2$. 
For $i=1,\ldots, n+1$, write $\nu_i=\nu(\epsilon_i,c_i)$, and for $i=1,\ldots, n$, write $\nu_i'=\nu(\epsilon_i',c_i')$. Then 
\[ \chi_1= \nu(\epsilon_1,c_1) \boxtimes \ldots \boxtimes \nu(\delta_{n},c_{n})
\]
and 
\[  \chi_2=\nu(\epsilon_1', c_1')\boxtimes \ldots \boxtimes \nu(\epsilon_n', c_n') .
\]
Two numbers $a,b$ in $\mathbb C$ are said to be {\it linked} if $a-b\in \mathbb{Z}$. Let $N^*$ be the sum of three terms:
\begin{enumerate}
\item the number of linked pairs $(c_i,c_j)$ ($i<j$);
\item the number of linked pairs $( c_i',c_j' )$ ($i<j$);
\item the number of linked pairs $(c_i+\frac{1}{2}, -c_j')$ for any $i,j$.
\end{enumerate}
 We shall prove by an induction on $N^*$. When $N^*$ is zero, $\pi_1$ and $\pi_2$ are irreducible by e.g. \cite{Sp81}. Then the required statement follows from \cite{LS13} (also see Remark \ref{rmk deduce equal rank case}). 

We pick the smallest $i^*$ such that $c_{i^*}$ is linked to some $c_j$ ($j>i^*$) or linked to some $-c_j'-\frac{1}{2}$. Then we will consider the following three cases:
\begin{enumerate}
\item There exists a linked pair $(c_i,c_j)$ $(i<j)$;
\item There exists a linked pair $(c_i',c_j')$ $(i<j)$;
\item There exists a linked pair $(c_i+\frac{1}{2}, -c_j')$.
\end{enumerate}
We shall only give a proof for the third case and the other two cases can be proved by some similar arguments.

We now consider the third case. Let $i^*$ be the smallest integer such that $(c_{i^*}+\frac{1}{2}, -c_{j}')$ is linked for some $j$. One observes that for any $i<i^*$, $(c_i,c_{i^*})$ is not linked. Let $j^*$ be the largest integer such that $(c_{i^*}+\frac{1}{2}, -c'_{j^*})$ is linked. Again, one observes that for any $j>j^*$, $(c'_{j^*},c'_j)$ is not linked. \\

\noindent
{\bf Case 1.} $\mathrm{Re}(c_{i^*}+\frac{1}{2})> \mathrm{Re}(-c_{j^*}')$.

Set $\widetilde{\chi}=\nu^*\boxtimes\nu_1 \boxtimes \ldots \boxtimes \nu_n \in T_{n+1}$, where $\nu^*=\nu(\epsilon^*, c^*)$ such that $(c^*,c_i)$ and $(c^*+\frac{1}{2},-c_j')$ are not linked for any $i,j$.

Let $\widetilde{\pi}=\mathrm{Ind}_{B_{n+1}}^{G_{n+1}}\widetilde{\chi}$. The idea of passing to a larger representation is to exploit some irreducibility result on parabolic inductions. We denote $\mathcal S=\widetilde{\pi}$ as in Section \ref{ss bruhat filt}. We have an exact sequence:
\[  0 \rightarrow  \mathcal S(\mathcal Q^*, \widetilde{\chi})\otimes (\chi_2\delta_{B_n}^{-1})  \rightarrow  \widetilde{\pi}\otimes (\chi_2\delta_{B_n}^{-1}) \rightarrow  (\mathcal S/\mathcal S(\mathcal Q^*, \widetilde{\chi})) \otimes (\chi_2\delta_{B_n}^{-1}) \rightarrow 0
\]
By Proposition \ref{prop transfer to fj}, the exact sequence can be rewritten as:
\[  0 \rightarrow  \pi_1\widehat{\otimes}\mathcal S(\mathbb R^n)\otimes (\chi_2\delta_{B_n}^{-1}) \rightarrow  \widetilde{\pi}\otimes (\chi_2\delta_{B_n}^{-1}) \rightarrow  (\mathcal S/\mathcal S(\mathcal Q^*, \widetilde{\chi})) \otimes (\chi_2\delta_{B_n}^{-1}) \rightarrow 0
\]
Denote the last term by $X$, and the first term by $Y$. \\

We begin with proving the following lemma. Let $\mathfrak b_n$ be the complexified Lie algebra of $B_n$. We write $\mathfrak b_n=\mathfrak t_n+\mathfrak n_n$, where $\mathfrak t_n$ and $\mathfrak n_n$ are the complexified Lie algebras of $T$ and $U_n$ respectively. We shall sometimes simply write as $\mathfrak b$, $\mathfrak t$ and $\mathfrak n$.

\begin{lemma} \label{lem transfer lemma}
 $\mathrm{Hom}_{G_n}(\widetilde{\pi}, \pi_2^{\vee}) \cong \mathrm{Hom}_{G_n}(\pi_1\widehat{\otimes} \mathcal S(\mathbb R^n) \otimes |.|^{1/2}, \pi_2^{\vee})$. 
\end{lemma}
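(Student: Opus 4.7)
The plan is to combine Frobenius reciprocity with Lemma \ref{lem vanishing homology}. The smooth contragredient $\pi_2^{\vee}$ can be realized as a parabolic induction from $\chi_2^{-1}$, and the appropriate form of Frobenius reciprocity in the Casselman--Wallach category gives, for any Casselman--Wallach $G_n$-module $V$,
\[ \mathrm{Hom}_{G_n}(V, \pi_2^\vee) \;\cong\; \mathrm{Hom}_{B_n}\!\bigl(V \otimes (\chi_2\delta_{B_n}^{-1}),\, \mathbb{C}\bigr), \]
with the particular twist $\chi_2 \delta_{B_n}^{-1}$ matching the one appearing throughout Lemma \ref{lem vanishing homology}. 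Applying this to $V = \widetilde{\pi}$ and to $V = \pi_1 \widehat{\otimes}\mathcal{S}(\mathbb{R}^n)\otimes |.|^{1/2}$, and using Proposition \ref{prop transfer to fj} to identify the latter with $\mathcal{S}(\mathcal{Q}^*,\widetilde{\chi})$, the claim becomes exactly the conclusion of Lemma \ref{lem vanishing homology}. It therefore suffices to verify the hypothesis $(*)$, namely that $H_i(\mathfrak{b}_n, X) = 0$ for all $i$, where $X = (\mathcal{S}/\mathcal{S}(\mathcal{Q}^*,\widetilde{\chi}))\otimes (\chi_2\delta_{B_n}^{-1})$.

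To prove $(*)$ I would stratify $X$ using the Bruhat filtration of Section \ref{ss bruhat filt}. The successive quotients $\mathcal{L}_{w_i}$ for $w_i \neq 1$ further decompose via the sub-filtration $F_p = J_p \widehat{\otimes}\mathcal{S}(V_{w_i})$, with graded pieces of the explicit form
\[ (J_{p-1}/J_p)\,\widehat{\otimes}\,\mathcal{S}(V_{w_i}\cap U_n)\,\widehat{\otimes}\,\mathcal{S}(V_{w_i}\cap U')\otimes \mathbb{C}_{(\widetilde{\chi}\delta_B)^{w_i}}. \]
By the long exact sequence in Lie-algebra homology and the exactness of $\widehat{\otimes}$ on nuclear spaces, $(*)$ reduces to vanishing of $H_\bullet(\mathfrak{b}_n, -)$ on each such subquotient, tensored with $\chi_2\delta_{B_n}^{-1}$.

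On each such subquotient, $\mathfrak{n}_n$ acts trivially on the factors $J_{p-1}/J_p$ and $\mathcal{S}(V_{w_i}\cap U')$ and by differentiation on $\mathcal{S}(V_{w_i}\cap U_n)$, per the action rules listed at the end of Section \ref{ss bruhat filt}. Iterating Lemma \ref{lem homology of schwartz} (applied to the one-parameter subgroups making up $V_{w_i}\cap U_n$), the $\mathfrak{n}_n$-homology is concentrated in degree zero and is a one-dimensional $T$-eigenspace tensored with the trivially acting factors. The Hochschild--Serre spectral sequence for $\mathfrak{b}_n \cong \mathfrak{n}_n \rtimes \mathfrak{t}$ then collapses at $E^2$, so the task reduces to showing that the residual $T$-character is nowhere trivial. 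Using (\ref{eqn action on yab}) together with the twist $\widetilde{\chi}^{w_i}$, and the generic choice of $\nu^* = \nu(\epsilon^*, c^*)$ (so that neither $(c^*, c_i)$ nor $(c^* + \tfrac12, c_j')$ is linked), the exponent $c^*$ enters every such weight non-trivially: by Lemma \ref{lem w non 1}, $w_i^{-1}$ never sends the $\nu^*$-slot to the $G_1$-position on which $T$ acts trivially, so $c^*$ cannot be absorbed by $\chi_2 \delta_{B_n}^{-1}$.

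The main obstacle is the last step: the combinatorial character-tracking across all Bruhat strata $w_i \in \widetilde{W}$ and all degrees $p$, verifying that the generically chosen $c^*$ prevents the $T$-weight on each subquotient from matching the trivial character. The crucial inputs are Lemma \ref{lem w non 1}, which controls the placement of the $\nu^*$-index under $w_i^{-1}$, and the unlinkedness hypothesis on $c^*$, which ensures that the $c^*$-contribution to the weight cannot be cancelled by the integer shifts carried by $\chi_2 \delta_{B_n}^{-1}$.
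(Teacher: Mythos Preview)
Your plan follows the paper's proof almost exactly: Frobenius reciprocity to pass to $B_n$-homomorphisms, Lemma~\ref{lem vanishing homology} to reduce to verifying $(*)$, the Bruhat filtration of Section~\ref{ss bruhat filt} to break $X$ into pieces $F_{p}/F_{p+1}$, Hochschild--Serre for $\mathfrak b_n=\mathfrak n_n\rtimes\mathfrak t$, and finally the observation (via Lemma~\ref{lem w non 1}) that the $\nu^*$-slot lands in position $k^*:=w^{-1}(1)\neq n+1$, so that the unlinked exponent $c^*$ genuinely appears in the $T$-weight.

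There is, however, one concrete error in the plan. The $\mathfrak n_n$-homology of the subquotient is \emph{not} concentrated in degree zero. Iterating Lemma~\ref{lem homology of schwartz} only computes $H_*(\mathfrak x_w,\mathcal S(V_w\cap U_n))$, where $\mathfrak x_w=\mathrm{Lie}(V_w\cap U_n)$; but $\mathfrak n_n$ is strictly larger, and its complementary piece $\mathfrak x^w$ acts trivially on the module. The resulting $\mathfrak n_n$-homology therefore carries, in positive degrees, $\mathfrak t$-weights coming from exterior powers of $\mathfrak x^w$. The paper accounts for this by fixing the diagonal element $h=E_{k^*,k^*}\in\mathfrak t$ and checking three things separately: $h$ has non-negative integral weight on $J_p/J_{p+1}$ (by (\ref{eqn action on yab})), non-negative integral weight on the $\mathfrak t$-constituents of $H_r(\mathfrak n_n,\mathcal S(V_w\cap U_n))$ arising from $\mathfrak x^w$ (using condition (\ref{eqn greater than cond}) to see these are spanned by $\mathfrak e_{k^*,b}$ with $k^*<b$), and zero weight on $\mathcal S(V_w\cap U')$. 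Since $h$ acts by a \emph{non-integral} weight on the character $\widetilde\chi^{w}\chi_2\delta_{B_{n+1}}\delta_{B_n}^{-1}$ (by the unlinkedness of $c^*$), the total $h$-weight on every subquotient is nonzero, giving the $\mathfrak t$-vanishing you need. Once you replace ``concentrated in degree zero'' by this weight analysis for the single element $h$, your plan becomes the paper's proof.
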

\begin{proof}
 By Frobenius reciprocity, it is equivalent to show
\[  \mathrm{Hom}_{B_n}(\widetilde{\pi}, \chi_2^{\vee}\delta_{B_n}) \cong \mathrm{Hom}_{B_n}(\pi_1\widehat{\otimes}\mathcal S(\mathbb R^n), \chi_2^{\vee}\delta_{B_n}) .
\]

By Lemma \ref{lem vanishing homology}, it suffices to check (*). We shall proceed by using the Bruhat filtration in Section \ref{ss bruhat filt} and freely use the notations in Section \ref{ss bruhat filt}.





 We shall show 
\[  H_i(\mathfrak b, (\mathcal S(\mathcal Q_{w_j})/\mathcal S(\mathcal Q_{w_{j+1}}))\otimes \chi_2\delta_{B_n}^{-1})=0 \]
for each $w_j$, which will follow (see e.g. \cite[Theorem 3.5.8]{We94}) if we could show that
\[  H_i(\mathfrak b, F_p/F_{p+1})=0 ,
\]
where $F_p/F_{p+1} \cong J_p/J_{p+1}\otimes \mathcal S(V_{w_j}\cap U_n) \widehat{\otimes} \mathcal S(V_{w_j}\cap U') \otimes (\chi_1\chi_2\delta_{B_{n+1}}\delta_{B_n}^{-1})$.

Set $w=w_j$. Now we can further write
\[  F_p/F_{p+1} \cong J_p/J_{p+1}\otimes \mathcal S(V_w\cap U_n) \widehat{\otimes} \mathcal S(V_w\cap U') \otimes \chi',
\]
where $\chi'=\chi_1\chi_2\delta_{B_{n+1}}\delta_{B_n}^{-1}$. As we discussed in Section \ref{ss bruhat filt}, $\mathfrak n$ acts trivially on all factors except $\mathcal S(V_w\cap U_n)$. 
\[  H_i(\mathfrak n,F_p/F_{p+1})=A\otimes H_i(\mathfrak n, \mathcal S(V_w\cap U_n)) ,
\]
\[ A =J_p/J_{p+1}\otimes \mathcal S(V_w\cap U') \otimes \chi' .
\] 
Now one may follow computations in \cite{CHM00}. One finds a sequence of subalgebras $\mathfrak m_1, \ldots, \mathfrak m_p$ of $\mathfrak n$ such that the successive subquotients $\mathfrak m_{i+1}/\mathfrak m_{i}$ is abelian. Then one applies spectral sequence arguments on $H_r(\mathfrak m_{i+1}/\mathfrak m_i,H_s(\mathfrak m_i/\mathfrak m_{i-1},.))$ (for $i=1,\ldots ,p-1$) to get that $H_r(\mathfrak n, \mathcal S(V_w\cap U_n))$ is isomorphic, as $\mathfrak t$-module, a sum of some $\mathfrak t$-subspaces of $\mathfrak x^w$ (see discussions in Section \ref{ss cj functor}).

By a spectral sequence argument again, it suffices to show that, for all $r,s$,
\[  H_r(\mathfrak t,  H_s(\mathfrak n, F_p/F_{p+1} )) =0
\]
To this end, let $k^*= w_i^{-1}(1)$. Note that $k^*\neq n+1$ by Lemma \ref{lem w non 1}. Let $h=\mathrm{diag}(0,\ldots, 0,1,0,\ldots, 0) \in \mathfrak t$ be the element with $1$ in $(k^*,k^*)$-entry. Recall that $J_p/J_{p+1}$ is spanned by the monomials $y_{rs}$ with $r>s$, and those that can contribute a weight to $h$ are those $y_{r1}$, and so in such case, $h$ acts with a non-negative integral weight on $J_p/J_{p+1}$. 

Moreover, as discussed above, $H_r(\mathfrak n, \mathcal S(V_w\cap U_n))$ is isomorphic to finite sum of some $\mathfrak t$-subspaces of $\mathfrak x^w$ (see Section \ref{ss bruhat filt} for a description of $\mathfrak x^w$). Now the condition (\ref{eqn greater than cond}) gives that the $1$-dimensional $\mathfrak t$-subspaces of $H_r(\mathfrak n, \mathcal S(V_w \cap U_n))$ that can contribute to a non-zero integral weight of $h$ is isomorphic to $\mathfrak e_{a,k^*}$ with $a<k^*$, where $\mathfrak e_{a,k^*}$ is the space of matrices with zero outside the entry $(a,k^*)$. Since $h$ must have a positive integral weight on $\mathfrak e_{a,k^*}$, $h$ has a non-negative integral weight on $H_r(\mathfrak n, F_p/F_{p+1})$. 

We now check that $h$ acts trivially on the part $\mathcal S(V_w\cap U')$. But the condition $w^{-1}(a)<w^{-1}(b)$ for $a>b$ gives non-zero entries $(w^{-1}(a),w^{-1}(b))$ in $V_w \cap U_{n+1}$, but then we cannot have $a=1$ and so no non-zero $(k^*,.)$ entries in $V_w \cap U_{n+1}$, particularly $(k^*,n+1)$-entry. Hence, $h$ also acts trivially on $\mathcal S(V_w\cap U')$.

Now combining with $w(h)$ acting by a non-integral weight on $\chi_1\chi_2\delta_{B_{n+1}}\delta_{B_n}^{-1}=\chi_1\chi_2 |.|^{1/2}$ (by using non-linkedness), all weights of $h$ on $F_p/F_{p+1} $ is non-zero. Thus
\[  H_r(\mathfrak t, H_s(\mathfrak n, F_p/F_{p+1})) =0
\]
for any $r,s$. Now a Hochschild-Serre spectral sequence argument gives that 
\[  H_r(\mathfrak b, F_p/F_{p+1})=0 
\]
for all $r$.
\end{proof}

We continue the proof of Theorem \ref{thm eq rank FJ}. By the irreducibility of representations of $\mathrm{GL}_2(\mathbb R)$ (see e.g. \cite{Sp81}) and \cite[Proposition 4.1.12]{Vo81},  the paragraph before Case 1 gives that 
\[  \widetilde{\pi} \cong \nu_{i^*} \times \nu^* \times \nu_1 \times\ldots \times \nu_{i^*-1}\times \nu_{i^*+1}\times \ldots \times \nu_{n+1} .
\]
Let $\pi_1'=\nu^* \times \nu_1\times \ldots \times \nu_{i^*-1} \times \nu_{i^*+1}\times \ldots \times \nu_{n+1}$. Note that Lemma \ref{lem transfer lemma} and its proof still work if one replaces $\pi_1$ by $\pi_1'$, in which one has to replace the linked condition in the last paragraph of the proof of Lemma \ref{lem transfer lemma} by the condition $\mathrm{Re}(c_{i^*}+\frac{1}{2})> \mathrm{Re}(-c_{j^*}')$. In other words, we have that:
\[\mathrm{Hom}_{G_n}(\pi_1\widehat{\otimes}\mathcal S(\mathbb R^n)\otimes |.|^{1/2}, \pi_2^{\vee})\cong  \mathrm{Hom}_{G_n}(\widetilde{\pi}, \pi_2^{\vee}) \cong \mathrm{Hom}_{G_n}(\pi_1'\widehat{\otimes} \mathcal S(\mathbb R^n) \otimes |.|^{1/2}, \pi_2^{\vee}) .
\]
By induction, the last space has dimension one, and so does the first one. \\

{\bf Case 2.} $\mathrm{Re}(c_{i^*}+\frac{1}{2}) \leq \mathrm{Re}(-c_{j^*}')$. Recall that $(c_{i^*}+\frac{1}{2}, -c_{j^*}')$ is linked, and so
\[  \mathrm{Re}(-c_{j^*}'+\frac{1}{2}) > \mathrm{Re}(c_{i^*}) .
\]
(Note that here is a strict inequaltiy.) 

Now one has that
\begin{align}
  \mathrm{Hom}_{G_n}(\pi_1 \widehat{\otimes}\mathcal S(\mathbb{R}^n)\otimes |.|^{1/2}, \pi_2^{\vee})  &\cong \mathrm{Hom}_{G_n}(\pi_1\widehat{\otimes}\pi_2\widehat{\otimes}\mathcal S(\mathbb{R}^n)\otimes |.|^{1/2}, \mathbb{C}) \\
	& \cong \mathrm{Hom}_{G_n}(\pi_2\widehat{\otimes} \mathcal S(\mathbb{R}^n) \otimes |.|^{1/2}, \pi_1^{\vee}) ,
\end{align}
where the two isomorphisms follow from taking the contragredient representation and the uniqueness of Casselman-Wallach globalization.

Similar to Case 1, we consider 
\[  \widehat{\pi}=\nu(\widehat{\epsilon}, \widehat{c}) \times \nu(\epsilon_1',c_1')\times \ldots \times \nu(\epsilon_n', c_n') ,\]
where $\widehat{c}$ satisfies that all pairs $(\widehat{c}, c_j')$ and $(\widehat{c}-\frac{1}{2}, -c_j)$ are not linked. The proof of Lemma \ref{lem transfer lemma} still applies to get that:
\begin{align} \label{eqn descent technique}
   \mathrm{Hom}_{G_n}(\widehat{\pi}, \pi_1^{\vee}) \cong \mathrm{Hom}_{G_n}(\pi_2 \widehat{\otimes} \mathcal S(\mathbb R^n)\otimes |.|^{1/2}, \pi_1^{\vee}) .
\end{align}

Now one applies $\theta$, we have that 
\begin{align}
\mathrm{Hom}_{G_n}(\widehat{\pi}, \pi_1^{\vee}) & \cong   \mathrm{Hom}_{G_n}(\theta(\widehat{\pi}|_{G_n}), \theta(\pi_1^{\vee}))  \\
 & \cong \mathrm{Hom}_{G_n}(\theta(\widehat{\pi}|_{G_n}), \theta(\pi_1)^{\vee}).
\end{align}

Now, by an irreducibility criteria again \cite{Sp81} and \cite{Vo81}, 
\[  \widehat{\pi} \cong \widehat{\pi}'\times \nu(\epsilon_{j^*}, c_{j^*}) ,
\]
where 
\[ \widehat{\pi}'= \nu(\widehat{\epsilon}, \widehat{c}) \times \nu(\epsilon_1',c_1')\times \ldots \times  \nu(\epsilon_{j^*-1}',c_{j^*-1}')\times \nu(\epsilon_{j^*+1}',c_{j^*+1}') \times \ldots \times \nu(\epsilon_n', c_n') .\]
So now, as in Lemma \ref{lem restriction for right}, 
\[ \theta(\widehat{\pi}|_{G_n}) \cong \nu(\epsilon_{j^*},-c_{j^*}')\times \theta(\widehat{\pi}')
\]
With such realization, the situation is similar to Lemma \ref{lem transfer lemma}  again and so the proof still applies to obtain that 
\[  \mathrm{Hom}_{G_n}(\theta(\widehat{\pi}')\widehat{\otimes}\mathcal{S}(\mathbb{R}^n)\otimes |.|^{1/2}, \theta(\pi_1)^{\vee}) \cong \mathrm{Hom}_{G_n}(\theta(\widehat{\pi}|_{G_n}), \theta(\pi_1)^{\vee})
\]
The last Hom has dimension equal to one by induction. Now tracing all the isomorphisms, we have that 
\[ \mathrm{dim}~\mathrm{Hom}_{G_n}(\pi_1\widehat{\otimes}\mathcal S(\mathbb R^n)\otimes |.|^{1/2}, \pi_2^{\vee}) = 1
\]
as desired.
\end{proof}

\begin{remark} \label{rmk deduce equal rank case}
\begin{itemize}
\item We emphasis that the analogous statement of Theorem \ref{thm eq rank FJ} for irreducible representations is obtained by Liu-Sun \cite{LS13}.
\item Instead of using \cite{LS13}, one may also use the above argument (in particular, Lemma \ref{lem transfer lemma}) to deduce the multiplicity one for irreducible principle series of equal rank case from the corank one case in \cite{SZ12}. 
\end{itemize}
\end{remark}

\section{Multiplicity result: $F=\mathbb{C}$} \label{ss multi complex}

In this section, we deal with $F=\mathbb{C}$. We shall only comment on the modification that is needed from $F=\mathbb R$. For $r, s \in \mathbb{C}$ satisfying $r-s \in \mathbb{Z}$, define a character $\nu_{r,s}: \mathbb{C}^{\times}\rightarrow \mathbb{C}^{\times}$ given by
\[  \nu_{r,s}(z) =z^r\bar{z}^s .
\]

The only essentially discrete series appears in $\mathrm{GL}_1(\mathbb C)$. Let $\nu_{r_1, s_1}, \ldots, \nu_{r_n, s_n}$ be characters of $\mathrm{GL}_1(\mathbb C)$. We again set $G_n=\mathrm{GL}_n(\mathbb C)$. Any standard modules take the form of a normalized parabolically induced representation:
\[ \nu_{r_1,s_1}\times \ldots \times \nu_{r_n,s_n}:=\mathrm{Ind}_B^{G_n} \nu_{r_1,s_1}\boxtimes \ldots \boxtimes \nu_{r_n,s_n}
\]
with $\mathrm{Re}(r_1+s_1)\geq \ldots \geq \mathrm{Re}(r_n+s_n)$. Here $B$ is the Borel subgroup containing all upper triangular matrices. In particular, all standard modules of $\mathrm{GL}_n(\mathbb C)$ are principal series.

For irreducibility, we have that for two characters $\nu_1, \nu_2$ of $G_1$, $\nu_1 \times \nu_2$ is reducible if and only if there exist integers $p,q \in \mathbb{Z}$ with $pq >0$ such that $\nu_1\nu_2^{-1}(z)=z^p\bar{z}^q$ (see e.g. \cite{Go70}). 

Treatments for $F=\mathbb{R}$ can still apply for $F=\mathbb C$. 
Let $|t|_{\mathbb C}=t\bar{t}$.  For example, for a $G_1 \times G_1$-character $\chi=\nu_1\boxtimes \nu_2$, the space $\mathrm{ind}_{B_2}^{G_2}\chi$ admits a short exact sequence as $G_1$-representations:
\[ 0 \rightarrow \mathcal S(\mathbb{C}) \rightarrow \mathrm{ind}_{B_2}^{G_2}\chi \rightarrow \mathbb{C}[[z,\bar{z}]]  \rightarrow 0 , \]
where $\mathcal S(\mathbb{C}) \cong \mathcal S(\mathbb{R}^2)$ with $\mathbb{C}^{\times}$-action given by:
\[ (t.f)(z)=|t|_{\mathbb C}^{-1/2}\nu_2(t)f(zt^{-1}) ,\]
 and $\mathbb{C}[[z,\bar{z}]]$ with $\mathbb{C}^{\times}$-action by
\[ t.z=|t|_{\mathbb{C}}^{1/2}\nu_1(t)tz, \quad t.\bar{z}=|t|_{\mathbb C}^{1/2}\nu_1(t)\bar{t}\bar{z} .\]

We also have an analogue of Lemma \ref{lem homology of schwartz}: $H_i(\mathfrak n', \mathcal S(\mathbb{C}))$ $\cong \mathbb{C}$ when $i=0$ and vanishes when $i \neq 0$. Here $\mathfrak n'$ is the complexified Lie algebra of $\mathbb C$.

The $G_n$-action on $\mathcal S(\mathbb C^n)$ is given by $(g.f)(v)=f(g^{-1}v)$. We omit the details of the proof of the following result:

\begin{theorem}
Let $\pi_1$ and $\pi_2$ be standard representations of $\mathrm{GL}_n(\mathbb{C})$. Then 
\[  \mathrm{dim}~\mathrm{Hom}_{\mathrm{GL}_n(\mathbb{C})}(\pi_1\widehat{\otimes}\mathcal S(\mathbb{C}^n) \otimes |\mathrm{det}(.)|_{\mathbb C}^{1/2}, \pi_2^{\vee}) = 1 .
\]
\end{theorem}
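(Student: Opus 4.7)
The plan is to parallel the proof of Theorem~\ref{thm eq rank FJ} step by step, replacing each real-analytic ingredient with its complex-analytic analogue. Since every standard module of $\mathrm{GL}_n(\mathbb{C})$ is automatically principal, write $\pi_1 = \nu_{r_1,s_1}\times\cdots\times\nu_{r_n,s_n}$ and $\pi_2 = \nu_{r_1',s_1'}\times\cdots\times\nu_{r_n',s_n'}$. Call an ordered pair $(\nu_{r,s},\,\nu_{r',s'})$ \emph{linked} when $r-r'\in\mathbb{Z}$ and $s-s'\in\mathbb{Z}$ and the characters are distinct; by Goodman's reducibility criterion quoted above, this gives reducibility of $\nu_{r,s}\times\nu_{r',s'}$ precisely when the additional sign condition $(r-r')(s-s')>0$ holds. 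Define $N^*$ exactly as in the real case by summing (i) the number of linked pairs among the parameters of $\pi_1$, (ii) the analogous count for $\pi_2$, and (iii) the number of linked cross-pairs $(\nu_{r_i+1/2,\,s_i+1/2},\,\nu_{r_j',s_j'})$ coming from the $|.|_{\mathbb{C}}^{1/2}$-twist. Proceed by induction on $N^*$.

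For the base case $N^*=0$, Goodman's criterion forces both $\pi_1$ and $\pi_2$ to be irreducible, and the conclusion is the equal-rank Fourier--Jacobi multiplicity one theorem of Liu--Sun. For the inductive step, adopt the same trichotomy and the same two-case breakdown (Case~1 and Case~2) as in the proof of Theorem~\ref{thm eq rank FJ}. The Bruhat filtration of Section~\ref{ss bruhat filt} goes through verbatim for $F=\mathbb{C}$: one stratifies $B\setminus G_{n+1}$ by the same Schubert cells, replaces each $\mathcal{S}(\mathbb{R})$ by $\mathcal{S}(\mathbb{C})$ and each $\mathbb{C}[[x_1,\ldots,x_k]]$ by $\mathbb{C}[[x_1,\bar{x}_1,\ldots,x_k,\bar{x}_k]]$, and obtains the same structure of associated graded pieces. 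Proposition~\ref{prop transfer to fj} has a literal complex analogue identifying $\mathcal{S}(\mathcal{Q}^*,\chi)$ with $|.|_{\mathbb{C}}^{1/2}\otimes\mathcal{S}(\mathbb{C}^n)\,\widehat{\otimes}\,\mathrm{ind}_{B_n}^{G_n}(\chi_2\boxtimes\cdots\boxtimes\chi_{n+1})$. The reassociation trick — passing from $\pi_1$ to an auxiliary $\widetilde{\pi}$ induced from a generic extra character $\nu^*$, then rearranging via the irreducibility of a $\mathrm{GL}_2(\mathbb{C})$-factor — still applies, with Goodman's criterion playing the role of Speh's theorem. Case~2 is handled as before by passing to contragredients and invoking Lemma~\ref{lem restriction for right}.

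The only step needing genuinely new input is the complex analogue of Lemma~\ref{lem transfer lemma}, namely the vanishing $H_i(\mathfrak{b}_n,\,F_p/F_{p+1}\otimes(\chi_2\delta_{B_n}^{-1}))=0$ on each associated graded piece. The Schwartz factor is dispatched by the vanishing $H_i(\mathfrak{n}',\mathcal{S}(\mathbb{C}))=0$ for $i>0$ recalled in the excerpt, and a Hochschild--Serre spectral sequence reduces the remaining question to a weight computation on $\mathfrak{t}$. This is where the complex case is more delicate: the real Cartan of $T\cong(\mathbb{C}^\times)^n$ is two-dimensional per factor, naturally spanned by $h_k=z_k\partial_{z_k}$ and $\bar{h}_k=\bar{z}_k\partial_{\bar{z}_k}$, so one must track a pair of weights attached to the index $k^*=w^{-1}(1)$. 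The main obstacle, and the only point where the complex case departs nontrivially from the real case, is to show that the monomial-ideal factor $J_{p-1}/J_p$ and the Schwartz factors $\mathcal{S}(V_w\cap U_n)\,\widehat{\otimes}\,\mathcal{S}(V_w\cap U')$ carry non-negative integer weights in both slots $(h_{k^*},\bar{h}_{k^*})$, while the non-linkedness of the pairs $(r_i+\tfrac12,\,r_j')$ and $(s_i+\tfrac12,\,s_j')$ (built into the choice of $\nu^*$) forces the character factor $\chi_1\chi_2|.|_{\mathbb{C}}^{1/2}$ to carry a non-integer weight in at least one of the two slots. Once this joint non-vanishing of weights is verified, the spectral sequence collapses and the vanishing of $\mathfrak{b}_n$-homology follows, closing the induction exactly as in the proof of Lemma~\ref{lem transfer lemma}.
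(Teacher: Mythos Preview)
Your proposal is correct and matches the paper's own approach: the paper explicitly omits the proof, stating only that the argument for $F=\mathbb{R}$ carries over with Godement's irreducibility criterion replacing Speh's, with $\mathcal S(\mathbb C)$ and $\mathbb C[[z,\bar z]]$ replacing their real analogues in the Bruhat filtration, and with the complex version of Lemma~\ref{lem homology of schwartz}; your sketch supplies precisely these substitutions and correctly isolates the one genuinely new point, the weight computation on the $2n$-dimensional complexified Cartan. One minor sharpening: since $r-s\in\mathbb Z$ for every $\nu_{r,s}$, the non-integrality in the $h_{k^*}$-slot and in the $\bar h_{k^*}$-slot are equivalent, so in fact \emph{both} slots carry a non-integer character weight simultaneously and either one alone suffices to kill the $\mathfrak t$-homology.
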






\end{document}